\newcommand\eps{\varepsilon}
\newcommand\R{{\mathbf{R}}}
\newcommand\C{{\mathbf{C}}}
\newcommand\rad{{\operatorname{rad}}}
\newcommand\loc{{\operatorname{loc}}}
\theoremstyle{plain}
  \newtheorem{theorem}[subsection]{Theorem}
  \newtheorem{proposition}[subsection]{Proposition}
  \newtheorem{corollary}[subsection]{Corollary}
\theoremstyle{remark}
  \newtheorem{remark}[subsection]{Remark}
  \newtheorem{example}[subsection]{Example}
\theoremstyle{definition}
  \newtheorem{definition}[subsection]{Definition}
\begin{document}

\title[Weak solutions of mass-critical NLS]{Global existence and uniqueness results for weak solutions of the focusing mass-critical non-linear Schr\"odinger equation}
\author{Terence Tao}
\address{Department of Mathematics, UCLA, Los Angeles CA 90095-1555}
\email{tao@math.ucla.edu}
\subjclass{35Q30}

\vspace{-0.3in}
\begin{abstract}
We consider the focusing mass-critical NLS $iu_t + \Delta u = - |u|^{4/d} u$ in high dimensions $d \geq 4$, with initial data $u(0) = u_0$ having finite mass $M(u_0) = \int_{\R^d} |u_0(x)|^2\ dx < \infty$.  It is well known that this problem admits unique (but not global) strong solutions in the Strichartz class $C^0_{t,\loc} L^2_x \cap L^2_{t,\loc} L^{2d/(d-2)}_x$, and also admits global (but not unique) weak solutions in $L^\infty_t L^2_x$.  In this paper we introduce an intermediate class of solution, which we call a \emph{semi-Strichartz class solution}, for which one does have global existence and uniqueness in dimensions $d \geq 4$.  In dimensions $d \geq 5$ and assuming spherical symmetry, we also show the equivalence of the Strichartz class and the strong solution class (and also of the semi-Strichartz class and the semi-strong solution class), thus establishing ``unconditional'' uniqueness results in the strong and semi-strong classes.  With these assumptions we also characterise these solutions in terms of the continuity properties of the mass function $t \mapsto M(u(t))$.
\end{abstract}

\maketitle

\section{Introduction}

\subsection{The focusing mass-critical NLS}

This paper is concerned with low regularity solutions $u: I \times \R^d \to \C$ to the initial value problem to the focusing mass-critical nonlinear Schr\"odinger equation (NLS)
\begin{equation}\label{nls}
\begin{split}
i u_t + \Delta u &= F(u)\\
u(t_0) &= u_0
\end{split}
\end{equation}
in high dimensions $d \geq 4$, where $I \subset \R$ is a time interval containing a time $t_0 \in \R$, $F: \C \to \C$ is the nonlinearity $F(z) := - |z|^{4/d} z$, and we assume $u_0$ to merely lie in the class $L^2_x(\R^d)$ of functions of finite mass $M(u_0) := \int_{\R^2} |u_0(x)|^2\ dx$.  The exponent $1+\frac{4}{d}$ in the nonlinearity makes the equation \emph{mass-critical}, so that the mass $M(u)$ is invariant under the scaling $u(t,x) \mapsto \frac{1}{\lambda^{d/2}} u(\frac{t}{\lambda^2}, \frac{x}{\lambda})$ of the equation.  The mass is also \emph{formally} conserved by the flow, thus we formally have $M(u(t)) = M(u_0)$ for all $t$, though it will be important in this paper to bear in mind that this formal mass conservation can break down if the solution is too weak in nature.

\begin{remark}
The condition $d \geq 4$ is assumed in order to ensure that the nonlinearity $F(u)$ is locally integrable in space for $u \in L^2_x(\R^d)$, so that the equation \eqref{nls} makes sense distributionally\footnote{Here and in the sequel, we use the subscript $\loc$ to denote boundedness of norms on compact sets, thus for instance $u \in L^\infty_{t,\loc} L^2_x(I \times \R^d)$ if and only if $u \in L^\infty_{t} L^2_x(J \times \R^d)$ for all compact $J \subset I$, with the function space $L^\infty_{t,\loc} L^2_x(I \times \R^d)$ then being given the induced Frechet space topology.} for $u \in L^\infty_{t,\loc} L^2_x(I \times \R^d)$.  It will be clear from our arguments that our results would also apply if $F$ were replaced by any other nonlinearity of growth $1+\frac{4}{d}$, whose derivative grew like $|z|^{4/d}$ and which enjoyed the Galilean invariance $F(e^{i\theta} z) = e^{i\theta} F(z)$ (in order to formally conserve mass), though in this more general setting, the mass $M(Q)$ of the ground state would need to be replaced by some unspecified constant $\eps_{F,d} > 0$ depending on the nonlinearity $F$ and the dimension $d$.
\end{remark}

The notion of a distributional solution, by itself, is too weak for applications; for instance, one has difficulty interpreting what the initial data condition $u(0) = u_0$ means for a distributional soution in $L^\infty_{t,\loc} L^2_x$.  In practice, one strengthens the notion of solution at this regularity by working with the integral formulation\footnote{We adopt the usual convention $\int_b^a = - \int_a^b$ when $a < b$.}
\begin{equation}\label{integral}
u(t) = e^{i(t-t_0)\Delta} u_0 + i \int_{t_0}^t e^{i(t-t')\Delta} F(u(t'))\ dt'
\end{equation}
of the equation, where $e^{it\Delta}$ is defined via the Fourier transform $\hat u(\xi) := \int_{\R^d} e^{-i x \cdot \xi} u(x)\ dx$ as
$$ \widehat{e^{it\Delta} u}(\xi) := e^{-it|\xi|^2} \hat u(\xi)$$
which is well-defined in the class of tempered distributions.  

\begin{remark}\label{weakconv} If $u_0 \in L^2_x(\R^d)$ and $u \in L^\infty_{t,\loc} L^2_x(I \times \R^d)$, then $F(u) \in L^\infty_{t,\loc} L^1_{x,\loc}(I \times \R^d)$, and the right-hand side of \eqref{integral} makes sense as a tempered distribution in $x$ for each time $t$.  Furthermore, it is easy to verify (by the standard duality argument) that the right-hand side of \eqref{integral} is continuous in $t$ in the topology ${\mathcal S}(\R^d)^*$ of tempered distributions.
\end{remark}

\subsection{Weak, strong, and Strichartz class solutions}

With these preparations, we can now introduce the three standard solution classes for this problem in $L^2_x(\R^d)$.

\begin{definition}[Weak, strong, Strichartz solutions]  Fix a dimension $d \geq 4$, an initial data $u_0 \in L^2_x(\R^d)$ and a time interval $I \subset \R$ containing a time $t_0 \in \R$.
\begin{itemize}
\item A \emph{weak solution} (or \emph{mild solution}) to \eqref{nls} is a function $u \in L^\infty_{t,\loc} L^2_x(I \times \R^d)$ which obeys \eqref{integral} in the sense of tempered distributions for almost every\footnote{By definition of $L^\infty_t$, weak solutions are only defined for almost every time $t$, though from Remark \ref{weakconv}, one can canonically define $u(t)$ for all $t \in I$.} time $t$. \item A \emph{strong solution} to \eqref{nls} is a weak solution $u$ such that $t \mapsto u(t)$ is continuous in the $L^2_x$ topology, thus $u$ lies in $C^0_{t,\loc} L^2_x(I \times \R^d)$.
\item A \emph{Strichartz-class solution} to \eqref{nls} is a strong solution which also lies in $L^2_{t,\loc} L^{2d/(d-2)}_x(I \times \R^d)$, thus $u$ lies in $C^0_{t,\loc} L^2_x(I \times \R^d) \cap L^2_{t,\loc} L^{2d/(d-2)}_x(I \times \R^d)$.
\end{itemize}
\end{definition}

\begin{remark}[Shifting initial data]  Because the right-hand side of \eqref{integral} is continuous in the distributional topology for any of the above three notions of solutions, we observe that if $u$ is a solution to \eqref{nls} in any of the above classes on an interval $I$, and $t_1 \in I$, then $u$ is also a solution to \eqref{nls} in the same class with initial time $t_1$ and initial data $u(t_1)$ (as defined using the right-hand side of \eqref{integral}).  Thus one may legitimately discuss solutions to NLS in one of the above three classes without reference to an initial time or initial data.
\end{remark}

\begin{remark}\label{trivial}  For future reference, we make the trivial remark that if one restricts a solution in any of the above classes to a sub-interval $J \subset I$, then one still obtains a solution in the same class.  Conversely, if one has a family of solutions in the same class on different time intervals $I_n$, such that $\bigcap_n I_n \neq \emptyset$ and any two solutions agree on their common domain of definition, then one can glue them together to form a solution in the same class on the union $\bigcup_n I_n$.  
\end{remark}

\begin{remark}\label{weak2}  From Remark \ref{weakconv} we make the important observation that if $u \in L^\infty_t L^2_x(I \times \R^d)$ is a weak solution to \eqref{nls}, then the map $t \mapsto u(t)$ is continuous in the weak topology of $L^2_x(\R^d)$.  In particular we have the convergence property
\begin{equation}\label{wlim}
\lim_{t' \to t} \langle u(t'), u(t) \rangle_{L^2_x(\R^d)} = M(u(t))
\end{equation}
for all $t \in I$, which by the cosine rule implies the asymptotic mass decoupling identity
\begin{equation}\label{cosine}
\lim_{t' \to t} M(u(t')) - M(u(t')-u(t)) - M(u(t)) = 0.
\end{equation}
Thus any $L^2$ discontinuity of $u$ at $t$ can be detected and quantified by the mass function $t \mapsto M(u(t))$; in particular, the solution $t \mapsto u(t)$ is continuous in $L^2$ at precisely those points for which the mass function $t \mapsto M(u(t))$ is continuous.
\end{remark}

In the Strichartz class, one has a satisfactory local existence and uniqueness theory:

\begin{proposition}[Local existence and uniqueness in the Strichartz class]\label{strichprop}  Let $d \geq 4$, $u_0 \in L^2_x(\R^d)$, and $t_0 \in \R$.
\begin{itemize}
\item[(i)] (Local existence) There exists an open interval $I$ containing $t_0$ and a Strichartz class solution $u \in C^0_{t,\loc} L^2_x(I \times \R^2) \cap L^2_{t,\loc} L^{2d/(d-2)}_x(I \times \R^d)$.
\item[(ii)] (Uniqueness) If $I$ is an interval containing $0$, and $u, u' \in C^0_{t,\loc} L^2_x(I \times \R^2) \cap L^2_{t,\loc} L^{2d/(d-2)}_x(I \times \R^d)$ are Strichartz class solutions to \eqref{nls} on $I$, then $u = u'$.
\item[(iii)] (Mass conservation) If $u \in C^0_{t,\loc} L^2_x(I \times \R^2) \cap L^2_{t,\loc} L^{2d/(d-2)}_x(I \times \R^d)$ is a Strichartz solution, then the function $t \mapsto M(u(t))$ is constant.
\end{itemize}
\end{proposition}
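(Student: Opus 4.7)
The plan is to prove Proposition \ref{strichprop} by the standard Cazenave--Weissler Strichartz-contraction method. The underlying tools are the homogeneous and inhomogeneous Strichartz estimates for $e^{it\Delta}$ on any $L^2$-admissible pair $(q,r)$ with $\frac{2}{q}+\frac{d}{r}=\frac{d}{2}$ (the pair $(2,\frac{2d}{d-2})$ is admissible in the relevant range $d\geq 4$), together with a H\"older-type bound tying the nonlinearity back to the Strichartz norm via the exponent identity $1+\frac{4}{d}=\frac{d+4}{d}$, which is where the mass-critical scaling enters.

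For (i), I would apply the contraction mapping theorem to the Duhamel map
\[
\Phi(u)(t) := e^{i(t-t_0)\Delta}u_0 + i\int_{t_0}^t e^{i(t-t')\Delta} F(u(t'))\,dt'
\]
on a small closed ball of $X(I) := C^0_t L^2_x(I\times\R^d) \cap L^2_t L^{2d/(d-2)}_x(I\times\R^d)$ for an open interval $I$ around $t_0$. By the homogeneous Strichartz estimate, $\|e^{i(\cdot - t_0)\Delta}u_0\|_{L^2_t L^{2d/(d-2)}_x(\R\times\R^d)} \lesssim \|u_0\|_{L^2_x}$ is finite, and the norm on $I$ tends to $0$ as $|I|\to 0$ by dominated convergence. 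On the nonlinear term, I would combine the inhomogeneous Strichartz estimate into $X(I)$ with a H\"older estimate of the schematic form $\|F(u)\|_{\text{dual Strichartz}} \lesssim \|u\|^{(d+4)/d}$ in a suitable Strichartz norm (the exponents close precisely because of mass-criticality; if convenient one runs the contraction in the diagonal pair $L^{2(d+2)/d}_{t,x}$ and then recovers $L^2_t L^{2d/(d-2)}_x$ membership by plugging the fixed point back into the Strichartz estimate). Choosing $|I|$ small enough produces a unique fixed point in a ball, which is the desired Strichartz class solution.

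For (ii), the subtlety is that one wants uniqueness in the full Strichartz class, not merely within the small ball produced in (i). Given two Strichartz solutions $u,u'$ on $I$, subtract their Duhamel identities and apply the same Strichartz/H\"older estimate to the difference; on any sub-interval $J\subset I$ where the Strichartz norms of $u$ and $u'$ are sufficiently small, one obtains $\|u-u'\|_{L^2_t L^{2d/(d-2)}_x(J)} \leq \tfrac{1}{2}\|u-u'\|_{L^2_t L^{2d/(d-2)}_x(J)}$, forcing $u = u'$ on $J$. Since $u,u'\in L^2_{t,\loc} L^{2d/(d-2)}_x(I\times\R^d)$, the set function $J\mapsto \int_J \|u(t)\|_{L^{2d/(d-2)}_x}^2\,dt$ is absolutely continuous, so every point of $I$ lies in a short enough neighbourhood on which the smallness hypothesis holds. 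Covering $I$ by a chain of such neighbourhoods overlapping at $t_0$ and invoking the gluing principle of Remark \ref{trivial} propagates $u=u'$ throughout $I$. This propagation step, rather than the contraction itself, is the main technical point.

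For (iii), I would argue by density. Approximate $u_0$ in $L^2_x$ by Schwartz data $u_0^{(n)}$; by Strichartz stability applied to the difference equation $u-u^{(n)}$ (the same H\"older algebra as in (i)--(ii)), the perturbed solutions $u^{(n)}$ exist on any compact $J\Subset I$ for $n$ large and converge to $u$ in $C^0_t L^2_x(J)$. On the other hand, for the smooth approximants one has enough regularity (by persistence of $H^s$ regularity for the Strichartz fixed point) to justify the classical integration-by-parts computation $\frac{d}{dt}M(u^{(n)}(t)) = -2\,\Im\int_{\R^d}|u^{(n)}|^{2+4/d}\,dx = 0$, hence $M(u^{(n)}(t))=M(u_0^{(n)})$. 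Passing to the limit yields $M(u(t)) = M(u_0)$ on $J$, and letting $J$ exhaust $I$ completes the proof.
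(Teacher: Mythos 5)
Your proposal is correct and is essentially the argument the paper has in mind: the paper simply cites the standard Cazenave--Weissler contraction scheme based on the (endpoint) Strichartz estimate for (i) and (ii), and for (iii) invokes exactly the regularise-then-pass-to-the-limit justification of mass conservation that you describe. You rightly flag the two standard technical points (running the fixed point in the diagonal norm $L^{2(d+2)/d}_{t,x}$ so the contraction closes for large mass, and the absolute-continuity/covering step needed to upgrade small-ball uniqueness to uniqueness in the full Strichartz class), so nothing is missing.
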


\begin{proof} This is a standard consequence of the endpoint Strichartz estimate\footnote{Here and in the sequel we use the usual notation $X \lesssim Y$ or $X=O(Y)$ to denote the estimate $|X| \leq CY$ for some absolute constant $C > 0$; if the implied constant $C$ depends on a parameter (such as $d$), we will indicate this by subscripts, e.g. $X \lesssim_d Y$ or $X = O_d(Y)$.}
\begin{equation}\label{endpoint-strichartz}
\| u \|_{L^2_t L^{2d/(d-2)}_x(I \times \R^d)} +
\| u \|_{C^0_t L^2_x(I \times \R^d)} \lesssim_d \|u(t_0)\|_{L^2_x(\R^d)} + \| i u_t + \Delta u \|_{L^2_t L^{2d/(d+2)}_x(I \times \R^d)}
\end{equation}
from \cite{tao:keel}: see \cite{cwI}, \cite{caz}.  Mass conservation is obtained in these references by first regularising the data and nonlinearity so that the solution is smooth (and the formal conservation of mass can be rigorously justified), and then taking limits using \eqref{endpoint-strichartz}.
\end{proof}

Because of this proposition (and Remark \ref{trivial}), every initial data $u_0 \in L^2_x(\R^d)$ and initial time $t_0 \in \R$ admits a unique \emph{maximal Strichartz-class Cauchy development} $u \in C^0_{t,\loc} L^2_x(I \times \R^2) \cap L^2_{t,\loc} L^{2d/(d-2)}_x(I \times \R^d)$ where $I$ is an open interval containing $t_0$, and $u$ is a Strichartz-class solution to \eqref{nls} which cannot be extended to any larger time interval.  

Unfortunately, the lifespan $I$ of this maximal Strichartz-class Cauchy development need not be global if the mass $M(u_0)$ is large.  For instance, if $Q$ is a non-trivial Schwartz-class solution to the ground state equation 
\begin{equation}\label{gse}
\Delta Q + |Q|^{4/d} Q = Q,
\end{equation}
then as is well known, the function
\begin{equation}\label{pc}
u(t,x) := \frac{1}{|t|^{d/2}} e^{-i/t} e^{i|x|^2/4t} Q(x/t)
\end{equation}
is a Strichartz-class solution on $(0,+\infty) \times \R^d$ or $(-\infty,0) \times \R^d$ but cannot be extended in this class across the time $t=0$.  One can also use Glassey's virial identity \cite{glassey} to infer indirectly the non-global nature of maximal Strichartz-class Cauchy developments for suitably smooth and decaying data with negative energy.

\begin{remark}  In the defocusing case $F(z) = + |z|^{4/d} z$ it is conjectured that all maximal Strichartz-class Cauchy developments are global.  This has recently been established in the spherically symmetric case in \cite{tvz-higher}, and is also known for data with additional regularity (e.g. energy class) or decay (e.g. $xu_0 \in L^2_x(\R^d)$), or with small mass; see \cite{tvz-higher} and the references therein for further discussion.  In the focusing case, the results of \cite{kvz} give global existence for spherically symmetric data when the mass $M(u_0)$ is strictly less than the mass $M(Q)$ of the ground state; see \cite{visan-endpoint} for a treatment of the endpoint case $M(u_0)=M(Q)$.    Again, it is conjectured that the same results hold without the spherical symmetry assumption, but this remains open.
\end{remark}

On the other hand, it is possible to continue solutions in a weak sense beyond the time for which Strichartz-class solutions blow up.  In particular, we have the following standard result:

\begin{proposition}[Global existence in the weak class]\label{wk}  Let $d \geq 4$, $u_0 \in L^2_x(\R^d)$, and $t_0 \in \R$.  Then there exists a global weak solution $u \in L^\infty_t L^2_x(\R \times \R^d)$ to \eqref{nls}.  Furthermore we have $M(u(t)) \leq M(u_0)$ for all $t \in \R$.
\end{proposition}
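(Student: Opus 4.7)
The plan is to construct a global weak solution as a weak-$*$ subsequential limit of solutions to a truncated version of \eqref{nls}. Fix a smooth cutoff $\chi: [0,\infty) \to [0,1]$ with $\chi = 1$ on $[0,1]$ and $\chi = 0$ on $[2,\infty)$, and for each $n \geq 1$ define $F_n(z) := -\chi(|z|/n) |z|^{4/d} z$. Then $F_n$ is bounded and globally Lipschitz on $\C$, retains the gauge symmetry $F_n(e^{i\theta}z) = e^{i\theta}F_n(z)$, and agrees with $F$ on $\{|z| \leq n\}$. A standard Banach fixed-point argument applied to \eqref{integral} (with $F$ replaced by $F_n$) in the space $C^0_t L^2_x(\R \times \R^d)$ yields a unique global solution $u^{(n)}$ with $u^{(n)}(t_0) = u_0$ and conserved mass $M(u^{(n)}(t)) = M(u_0)$ (the mass conservation being obtained by first approximating $u_0$ by Schwartz data, for which the solution is smooth and the formal computation is classical, and then passing to the limit via the Lipschitz dependence of $u^{(n)}$ on the initial data).

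The sequence $\{u^{(n)}\}$ is uniformly bounded in $L^\infty_t L^2_x(\R \times \R^d)$, so Banach-Alaoglu produces a weak-$*$ subsequential limit $u^{(n_k)} \stackrel{\ast}{\rightharpoonup} u$ with $\|u\|_{L^\infty_t L^2_x} \leq M(u_0)^{1/2}$. To pass to the limit in the nonlinear Duhamel term of \eqref{integral} one needs strong \emph{local} compactness, since weak convergence cannot accommodate the superlinear map $F$. This is obtained via dispersive smoothing: for any $\phi \in C^\infty_c(\R^d)$, Kato's local smoothing estimate applied to $u^{(n)}$ (viewed as an inhomogeneous linear Schr\"odinger evolution with forcing $F_n(u^{(n)})$, uniformly bounded in $L^\infty_t L^{2d/(d+4)}_x$ via $|F_n(u)| \leq |u|^{1+4/d}$) yields uniform control of $\phi u^{(n)}$ in $L^2_{t,\loc} H^{1/2}_x$. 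Combined with the uniform bound on $\partial_t u^{(n)}$ in $L^\infty_{t,\loc} H^{-N}_{x,\loc}$ read off from the equation, and the compact embedding $H^{1/2}_{x,\loc} \hookrightarrow\hookrightarrow L^2_{x,\loc}$, the Aubin-Lions lemma extracts a further subsequence converging to $u$ strongly in $L^2_{t,\loc} L^2_{x,\loc}$.

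With strong local convergence in hand, the pointwise bound $|F(z)-F(w)| \lesssim (|z|^{4/d}+|w|^{4/d})|z-w|$, H\"older's inequality, and the uniform $L^\infty_t L^2_x$ bound give $F(u^{(n_k)}) \to F(u)$ in $L^1_{\loc}(\R \times \R^d)$; meanwhile the residual term $F_n(u^{(n)}) - F(u^{(n)})$, supported on $\{|u^{(n)}| \geq n\}$, vanishes in $L^1_{\loc}$ by the equi-integrability of $|u^{(n)}|^{1+4/d}$ on compact spacetime sets (obtained from the uniform $L^\infty_t L^2_x$ bound together with Chebyshev, or equivalently from the strong $L^2_{\loc}$ convergence). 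Passing to the limit in the regularised Duhamel identity in the space of tempered distributions shows that $u$ satisfies \eqref{integral}, so $u$ is a weak solution; the bound $M(u(t)) \leq M(u_0)$ for all $t \in \R$ then follows from the weak continuity of $u$ in $L^2_x$ (Remark \ref{weak2}) together with weak lower semicontinuity of the $L^2$ norm. The main obstacle is the local compactness step, which relies on Kato smoothing to gain a fractional derivative on compact sets; the endpoint case $d=4$ is the most delicate, since $1+4/d = 2$ leaves no H\"older slack between the regularity of $u\in L^2_x$ and the integrability needed for $F(u)$, forcing one to identify the limit via equi-integrability (Vitali) rather than bounded convergence.
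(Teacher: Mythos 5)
Your overall strategy (truncate the nonlinearity, solve the truncated equation globally with conserved mass, extract a limit) is reasonable, and the bookkeeping at the ends of your argument (Vitali/equi-integrability for the truncation error, weak lower semicontinuity for $M(u(t)) \leq M(u_0)$) is fine. The genuine gap is in the compactness step, which rests on an estimate that is false. You claim that Kato local smoothing, applied to $u^{(n)}$ viewed as a linear evolution with forcing $F_n(u^{(n)})$ bounded only in $L^\infty_t L^{2d/(d+4)}_x$, yields a uniform bound on $\phi u^{(n)}$ in $L^2_{t,\loc} H^{1/2}_x$. The homogeneous part of \eqref{integral} does enjoy such a gain, but the Duhamel term does not: the retarded smoothing estimates gain at most one derivative relative to the forcing and require the forcing to lie in a dual smoothing space (e.g.\ $L^1_t L^2_x$, or a weighted $L^2_t \dot H^{-1/2}_x$), whereas $L^\infty_t L^{2d/(d+4)}_x$ embeds only into $L^\infty_t H^{-2}_{x,\loc}$; the pair $(\infty, 2d/(d+4))$ is exactly the forbidden endpoint of the inhomogeneous Strichartz/smoothing family, which is the reason weak solutions are not automatically Strichartz class in the first place. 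Moreover, no linear estimate of the type you invoke can exist: the pseudoconformal solution \eqref{pc} (extended across $t=0$) has $u \in L^\infty_t L^2_x$ and $F(u) \in L^\infty_t L^{2d/(d+4)}_x$ uniformly, yet by scaling $\|\phi u(t)\|_{\dot H^{1/2}_x} \sim |t|^{-1/2}$ as $t \to 0$ for any cutoff $\phi$ equal to $1$ near the origin, so $\phi u \notin L^2_{t,\loc} H^{1/2}_x$. Since the only information you control uniformly in $n$ is precisely the pair of norms above, any estimate deduced from them would apply equally to \eqref{pc} and is therefore false. Without this step there is no strong $L^2_{t,x,\loc}$ convergence, and the passage to the limit in $F(u^{(n_k)})$ collapses; the free compactness available from $u^{(n)}$ bounded in $L^\infty_t L^2_x$ and $\partial_t u^{(n)}$ bounded in $L^\infty_t H^{-2}_{x,\loc}$ only gives convergence in $C^0_t H^{-s}_{x,\loc}$, which cannot see the superlinear nonlinearity at $L^2_x$ regularity.

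This loss of compactness is the real difficulty of the proposition, and the paper does not attempt your route: its stated proof is explicitly only a sketch, based on a parabolic regularisation $i\eps\Delta u^{(\eps)}$ of the equation (which is also why the conclusion is the inequality $M(u(t)) \leq M(u_0)$ rather than conservation: the damping dissipates mass), and the rigorous justification of global existence of weak solutions is ultimately subsumed by Theorem \ref{main}, which produces a global semi-Strichartz (hence weak) solution with non-increasing mass by an entirely different mechanism --- the Strichartz local theory of Proposition \ref{strichprop} together with continuation past blowup times. If you want to complete a construction along your lines, you must either supply a genuine source of uniform local compactness (this is what the viscosity term is for) or abandon the limiting argument in favour of the surgery-type construction of Theorem \ref{main}.
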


\begin{proof} We will prove a stronger result than this shortly, so we only give a sketch of proof here. By Remark \ref{trivial} and time reversal symmetry, it suffices to build a solution on $[t_0,+\infty)$.  For each $\eps > 0$, one can easily use parabolic theory to construct a global (strong) solution to the damped NLS $iu^{(\eps)}_t + \Delta u^{(\eps)} = i\eps \Delta u^{(\eps)} + F_\eps(u^{(\eps)})$ on $[t_0,+\infty)$, whose mass is bounded above by $M(u_0)$, where $F_\eps$ is a suitably damped version of $F$ (e.g. $F_\eps(z) := -\max(|z|,1/\eps)^{4/d} z$); extracting a weakly convergent subsequence and taking weak limits we obtain the claim.
\end{proof}

Unfortunately, while these weak solutions are global, they are non-unique, as the following standard example shows.

\begin{example}\label{nonunique}  Consider the function given by \eqref{pc} for $t \in (0,+\infty)$ and by zero for $t \in (-\infty,0]$.  This is a global weak solution in the sense of the above proposition (taking $t_0$ to be any positive time, and setting $u_0 = u(t_0)$), but is not unique; if one for instance takes $u$ to equal \eqref{pc} for $t \in (-\infty,0)$ rather than equal to zero, then the new solution is still a global weak solution with the same initial data.  Note that a modification of this example shows that uniqueness of weak solutions can break down even if the initial data is zero, and so one cannot hope to recover uniqueness purely by strengthening the hypotheses on the initial data.
\end{example}

\begin{remark} Example \ref{nonunique} also shows that mass is not necessarily conserved for weak solutions.  On the other hand, from \eqref{cosine} we see that the function $t \mapsto M(u(t))$ is lower semi-continuous, at least.
\end{remark}

\subsection{Semi-Strichartz solutions}

To summarise the discussion so far, the Strichartz class of solutions has uniqueness but no global existence, while the class of weak solutions has global existence but no uniqueness.  It is thus natural to ask whether there is an intermediate class of solutions for which one has both global existence and uniqueness.  To answer this we define some further solution classes.

\begin{definition}[Semi-strong and semi-Strichartz solutions]\label{semidef}  Fix a dimension $d \geq 4$, an initial data $u_0 \in L^2_x(\R^d)$ and a time interval $I \subset \R$ containing a time $t_0 \in \R$.  A \emph{semi-strong solution} (resp. \emph{semi-Strichartz class solution}) to \eqref{nls} is a weak solution $u$ such that for every $t \in I \cap [t_0,+\infty)$ there exists $\eps > 0$ such that $u$ is a strong solution (resp. Strichartz class solution) when restricted to $I \cap [t,t+\eps)$, and for every $t \in I \cap (-\infty,t_0]$ there exists $\eps > 0$ such that $u$ is a strong solution (resp. Strichartz class solution) when restricted to $I \cap (t-\eps,t]$.
\end{definition}

We summarise the obvious inclusions between the five classes of solution in Figure \ref{include}.  Note that unlike the weak, strong, and Strichartz classes, the semi-strong and semi-Strichartz classes of solution depend on the choice of initial time $t_0$.

\begin{figure}
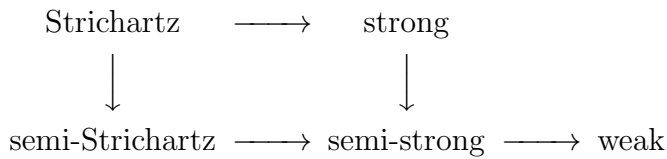
\label{include}
$$
\begin{CD}
\operatorname{Strichartz}       @>>>        \operatorname{strong}    @. \\
@VVV                                         @VVV                             @.       \\
\operatorname{semi-Strichartz}  @>>>        \operatorname{semi-strong}  @>>>   \operatorname{weak}
\end{CD}
$$
\caption{Inclusions between solution classes.  In dimensions $d \geq 5$ and assuming spherical symmetry, we will show that two horizontal inclusions on the left are in fact equivalences.}
\end{figure}

\begin{example}\label{semiex}  Consider the weak solution $u \in L^\infty_t L^2_x(\R \times \R^d)$ which is given by \eqref{pc} for $t > 0$ and is zero for $t \leq 0$, let $t_0 > 0$, and set $u_0 := u(t_0)$.  Then $u$ is a semi-Strichartz class solution (and thus semi-strong solution) to \eqref{nls}, but is not strong or Strichartz-class.  If one redefines $u$ for $t < 0$ by \eqref{pc}, then $u$ remains a weak solution, but is no longer semi-strong or semi-Strichartz.
\end{example}

\begin{remark} The constructions in \cite{bowa}, in our notation, yield semi-Strichartz class solutions which blow up in the Strichartz class at a specified finite set of points in time, and are equal to a prescribed state in $L^2_x(\R^d)$ at the final blowup time, in dimensions $d=1,2$.
\end{remark}

Our first main result is that the semi-Strichartz solution class enjoys global existence and uniqueness:

\begin{theorem}[Global existence and uniqueness in the semi-Strichartz solution class]\label{main}  Let $d \geq 4$, $u_0 \in L^2_x(\R^d)$, and $t_0 \in \R$.    Then there exists a global semi-Strichartz class solution $u \in L^\infty_t L^2_x(\R \times \R^d)$ to \eqref{nls}.  Furthermore, this solution is unique in the sense that any other semi-Strichartz solution to \eqref{nls} on a time interval $I$ containing $t_0$ is the restriction of $u$ to $I$.  Finally, $M(u(t))$ is monotone non-increasing for $t \geq t_0$ and monotone non-decreasing for $t \leq t_0$ (in particular, the only possible discontinuities are jump discontinuities), and has a jump discontinuity exactly at those times $t$ for which $u$ is not locally a Strichartz class solution.
\end{theorem}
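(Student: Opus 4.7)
I plan to handle four tasks: global existence, uniqueness, mass monotonicity, and the jump characterisation. Proposition \ref{strichprop} supplies the local Strichartz theory; the point of the semi-Strichartz class is to allow Strichartz blow-ups to be patched by passing to a weak $L^2_x$ limit, using the distributional continuity of \eqref{integral} from Remark \ref{weakconv}.

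For global existence, I would apply Zorn's lemma to the collection of semi-Strichartz class solutions on intervals $[t_0, T)$ with initial data $u_0$ at $t_0$, ordered by restriction (chains glue trivially by Remark \ref{trivial}). Let $u$ be a maximal element on $[t_0, T^+)$ and suppose for contradiction that $T^+ < +\infty$. On each local Strichartz subinterval of $[t_0, T^+)$, mass is conserved by Proposition \ref{strichprop}(iii); across any gap it can only drop by the lower semicontinuity provided by \eqref{cosine}. Hence $M(u(t)) \leq M(u_0)$ on $[t_0, T^+)$, and Remarks \ref{weakconv}--\ref{weak2} then let me define $u(T^+) \in L^2_x(\R^d)$ as the distributional (and weak-$L^2_x$) limit of $u(t)$ at $t = T^+$. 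Applying Proposition \ref{strichprop}(i) at $(T^+, u(T^+))$ produces a Strichartz class solution on $[T^+, T^+ + \delta)$, and concatenating (using the integral formulation to verify the weak-solution property across $T^+$) yields a semi-Strichartz extension, contradicting maximality. Symmetric reasoning on $(-\infty, t_0]$ produces the global solution.

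Uniqueness proceeds by a standard supremum argument: given another semi-Strichartz solution $\tilde u$ on an interval $I \ni t_0$, set $T^\star = \sup\{t \in I \cap [t_0, +\infty) : u = \tilde u \text{ on } [t_0, t]\}$. The semi-Strichartz hypothesis at $t_0$ together with Proposition \ref{strichprop}(ii) forces $T^\star > t_0$; if $T^\star < \sup(I \cap [t_0, +\infty))$, then distributional continuity gives $u(T^\star) = \tilde u(T^\star)$, and Proposition \ref{strichprop}(ii) applied on $[T^\star, T^\star + \epsilon)$ (where both solutions are Strichartz by the semi-Strichartz hypothesis) extends the agreement, a contradiction. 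Mass monotonicity on $[t_0, +\infty)$ then follows from the same ingredients used above: Proposition \ref{strichprop}(iii) combined with the right-Strichartz property makes $M(u(\cdot))$ locally constant from the right, and \eqref{cosine} makes it lower semicontinuous, so a standard supremum argument forces $M(u(\cdot))$ to be non-increasing. Its discontinuities are thus jumps, forming a countable set.

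The forward direction of the jump characterisation is immediate from Remark \ref{weak2}: if $u$ is locally Strichartz near $t$, then $u \in C^0_{t,\loc} L^2_x$ there, so $M(u(\cdot))$ is continuous at $t$. The converse is the step I expect to be the main obstacle, and requires invoking the standard Strichartz perturbation/stability theory derived from \eqref{endpoint-strichartz}. Suppose $M(u(\cdot))$ is continuous at $t > t_0$; Remark \ref{weak2} then promotes weak $L^2_x$ convergence to strong convergence $u(s) \to u(t)$ in $L^2_x$. Proposition \ref{strichprop}(i) gives a Strichartz class solution $v$ on $(t - \delta, t + \delta)$ with $v(t) = u(t)$, and Proposition \ref{strichprop}(ii) applied to $u$ and $v$ at $t$ (where both are right-Strichartz) gives $u = v$ on $[t, t + \delta')$. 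For the past, I would pick $\eta > 0$ so small that $\|u(t - \eta) - v(t - \eta)\|_{L^2_x}$ lies below the perturbative threshold; Strichartz stability then produces a Strichartz class solution $w$ on $[t - \eta, t + \delta/2]$ with $w(t - \eta) = u(t - \eta)$. Since both $u$ (restricted) and $w$ are semi-Strichartz with respect to the shifted initial time $t_0' = t - \eta$ (the original semi-Strichartz property at $t_0 \leq t_0'$ supplies right-Strichartz at every $s \geq t_0'$), the uniqueness result above forces $u = w$ on $[t - \eta, t + \delta/2]$, placing $u$ in the Strichartz class on a full neighbourhood of $t$. The case $t < t_0$ is symmetric.
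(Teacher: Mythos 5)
Your proposal is correct and follows essentially the same architecture as the paper's proof: a continuity/maximality argument for global existence (extending to the endpoint via the uniform mass bound and the distributional continuity of the Duhamel formula), the closed-and-relatively-open agreement-set argument for uniqueness, and, for the converse jump characterisation, strong $L^2_x$ convergence from \eqref{cosine} followed by the local Strichartz theory and semi-Strichartz uniqueness from the shifted initial time $t-\eta$. The only (harmless) variation is in that last step, where you build the Strichartz continuation by perturbing off the solution $v$ launched from $u(t)$ via the standard stability lemma, whereas the paper runs a small-norm Picard iteration directly from the data $u(t_2)$ using monotone convergence to make $\| e^{i(\cdot-t_2)\Delta} u(t_2)\|_{L^2_t L^{2d/(d-2)}_x}$ small.
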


\begin{remark}  Informally, the unique semi-Strichartz class solution is formed by solving the equation in the Strichartz class whenever possible, and deleting any mass that escapes to spatial or frequency infinity when the solution leaves the Strichartz class.  The relationship between this class of solution and Strichartz class solutions is analogous to the relationship between Ricci flow with surgery and Ricci flow in the work of Perelman \cite{per1}, \cite{per2}, though of course the situation here is massively simpler than with Ricci flow due to the semilinear and flat nature of our equation.  On the other hand, the ``entropy'' type solutions constructed in Proposition \ref{wk} do not necessarily converge to the solution in Theorem \ref{main}.  For instance, the arguments in \cite{merle} can be adapted to show that if one starts with the initial data of \eqref{pc} at time $t=-1$ (say) and evolves a parabolically regularised version of \eqref{nls} using some viscosity parameter $\eps$, then the solution at $t=+1$ can converge to an arbitrary phase rotation of the solution \eqref{pc} along a subsequence of $\eps$, and in particular these solutions do not converge to the semi-Strichartz solution (which vanishes after the singularity time).  However, it is conceivable that the entropy solutions do converge to the semi-Strichartz solutions for \emph{generic} data, although the author does not know how one would try to prove this.
\end{remark}

\begin{remark} One can push the global existence result further, to obtain scattering at $t = \pm \infty$, and can in fact even push the solution ``beyond'' $t=+\infty$ and $t=-\infty$ by using the pseudoconformal transform or lens transform, in the spirit of \cite{tao-lens}.  We omit the details.
\end{remark}

\begin{remark} While the semi-Strichartz class enjoys global existence and uniqueness, it does not enjoy continuous dependence on the data and is thus not a well-posed class of solutions.  Indeed, if one considers the solution in Example \ref{semiex} for the spherically symmetric ground state $Q$, and then perturbs the initial data $u_0=u(t_0)$ to have slightly smaller mass (while staying spherically symmetric), then from the results in \cite{kvz} we know that the perturbed solution exists globally in the Strichartz class, and in particular has mass close to $M(Q)$ for all negative times, in contrast to the original solution in Example \ref{semiex} which has zero mass for all negative times, thus contradicting continuous dependence on the data in any reasonable topology.  Indeed this argument strongly suggests that there is no solution class for this equation which is globally well-posed in the sense that one simultaneously has global existence, uniqueness, and continuous dependence of the data, and which is compatible with the Strichartz class of solutions.
\end{remark}

\begin{remark} In \cite{mer2}, \cite{mer3}, solutions to \eqref{nls} are constructed which are initially in $H^1_x(\R^d)$, but at the first blowup time develop a single point of concentration, plus a residual component $u^*$ which is not in $L^p_x(\R^d)$ for any $p>2$, and in particular has left $H^1_x(\R^d)$.  The semi-Strichartz solution would continue the evolution from $u^*$ at this time.  Thus, we do not have persistence of regularity for the semi-Strichartz class: a semi-Strichartz solution can exit the space in finite time.  (A similar phenomenon for the supercritical focusing NLS was also obtained in \cite{mer}.  In contrast, the solution in Example \ref{semiex} has $H^1$ norm going to infinity as $t \to 0^+$, but never actually leaves $H^1_x(\R^d)$; similarly for the solutions in \cite{bowa}).  
\end{remark}

Theorem \ref{main} is in fact an easy consequence of Proposition \ref{strichprop} and is proven in Section \ref{mainproof}.  One can be somewhat more precise about the jump discontinuities:

\begin{theorem}[Quantisation of mass loss]\label{quant} Let $d \geq 4$, $u_0 \in L^2_x(\R^d)$, and $t_0 \in \R$.    Let $u \in L^\infty_t L^2_x(\R \times \R^d)$ be the unique global semi-Strichartz class solution to \eqref{nls} given by Theorem \ref{main}.  Then there exists an absolute constant $\eps_d > 0$ (depending only on $d$) such that every jump discontinuity of the function $t \mapsto M(u(t))$ has jump at least $\eps_d$.  If $u_0$ is spherically symmetric, one can take $\eps_d$ to be the mass $M(Q)$ of the ground state.
\end{theorem}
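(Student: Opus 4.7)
\emph{Proof plan.}  The plan is to compare, at any jump time $t_*$, the semi-Strichartz solution $u$ with the local Strichartz solution $v$ starting from $u(t_*)$, and to show that if the mass loss at $t_*$ were too small then a backward-in-time perturbation argument would rebuild $u$ as a Strichartz solution on a left-neighbourhood of $t_*$, contradicting the presence of a jump.  By time-reversal symmetry it suffices to treat a jump time $t_*>t_0$; set $m_-:=\lim_{t\to t_*^-}M(u(t))$, $m_+:=M(u(t_*))$, and $\alpha^2:=m_--m_+>0$.  Let $v$ be the local two-sided Strichartz class solution with $v(t_*)=u(t_*)$ provided by Proposition~\ref{strichprop}(i), defined on some $(t_*-\eta,t_*+\eta)$, and set $r:=u-v$ on $(t_*-\eta,t_*]$; subtracting the integral formulations of $u$ and $v$, $r$ obeys the perturbed integral equation with nonlinearity $F(v+r)-F(v)$.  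Combining the weak $L^2$-continuity of $u$ at $t_*$ (Remark~\ref{weak2}) with the strong continuity of $v$, an expansion of $\|r(t)\|_{L^2_x}^2=M(u(t))+M(v(t))-2\operatorname{Re}\langle u(t),v(t)\rangle$ gives $\|r(t)\|_{L^2_x}^2\to\alpha^2$ as $t\to t_*^-$, so the mass loss $\alpha^2$ is precisely the residual $L^2$-mass of $r$ at $t_*$.

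Suppose for contradiction that $\alpha^2<\eps_d$ for a small absolute constant $\eps_d>0$ depending only on $d$.  Since $v\in L^2_{t,\loc}L^{2d/(d-2)}_x$, I may choose $s\in(\max(t_0,t_*-\eta),t_*)$ close enough to $t_*$ so that both $\|v\|_{L^2_tL^{2d/(d-2)}_x([s,t_*]\times\R^d)}$ and $\|r(s)\|_{L^2_x}$ are smaller than any prescribed quantity depending only on $d$.  Running the standard contraction mapping argument on the perturbed integral equation on $[s,t_*]$, using~\eqref{endpoint-strichartz} together with the H\"older-type nonlinear estimate
$$\|F(v+\tilde r)-F(v)\|_{L^2_tL^{2d/(d+2)}_x}\lesssim_d\bigl(\|v\|_{L^2_tL^{2d/(d-2)}_x}+\|\tilde r\|_{L^2_tL^{2d/(d-2)}_x}\bigr)^{4/d}\|\tilde r\|_{L^2_tL^{2d/(d-2)}_x},$$
then produces, for $\eps_d$ sufficiently small, a Strichartz class solution $\tilde r$ of the perturbed equation on $[s,t_*]$ with $\tilde r(s)=r(s)$.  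Setting $\tilde u:=v+\tilde r$ yields a Strichartz class solution of~\eqref{nls} on $[s,t_*]$ with $\tilde u(s)=u(s)$.

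To close the contradiction, let $T^*:=\sup\{T\in[s,t_*]:u\equiv\tilde u\text{ on }[s,T]\}$.  The weak $L^2$-continuity of $u$ from Remark~\ref{weak2} and the strong $L^2$-continuity of $\tilde u$ force $u(T^*)=\tilde u(T^*)$, so the supremum is attained.  If $T^*<t_*$, the semi-Strichartz property of $u$ at $T^*$ supplies $\delta>0$ with $u$ Strichartz class on $[T^*,T^*+\delta)$, and the Strichartz-class uniqueness statement of Proposition~\ref{strichprop}(ii), applied to $u$ and $\tilde u$ with common initial data $u(T^*)=\tilde u(T^*)$, extends the agreement slightly past $T^*$, contradicting maximality.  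Hence $T^*=t_*$ and $u$ is Strichartz class on $[s,t_*]$; Proposition~\ref{strichprop}(iii) then yields $M(u(s))=M(u(t_*))=m_+$, whereas monotonicity of the mass on $[t_0,\infty)$ forces $M(u(s))\geq m_-$, contradicting $m_->m_+$.  This establishes the bound $\alpha^2\geq\eps_d$ in the general case.

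In the spherically symmetric setting, $u$, $v$ and hence $r$ are all radial, and the small-data contraction above is replaced by the radial sub-threshold Strichartz theory of \cite{kvz}, which yields global Strichartz control for radial NLS solutions with mass strictly below $M(Q)$.  Assuming $\alpha^2<M(Q)$ for contradiction and picking $s$ so that $\|r(s)\|_{L^2_x}^2<M(Q)$, the NLS flow $R$ of $r(s)$ is a global Strichartz class solution; rewriting the equation for $r$ as $ir_t+\Delta r=F(r)+G$ with $G:=F(v+r)-F(v)-F(r)$ (which is small in $L^2_tL^{2d/(d+2)}_x$ once $\|v\|_{L^2_tL^{2d/(d-2)}_x([s,t_*])}$ is small, by a routine H\"older estimate), the standard stability theory for mass-critical NLS produces the Strichartz extension $\tilde r$ of $r$ on $[s,t_*]$, after which the argument of the previous paragraph applies verbatim to give $\alpha^2\geq M(Q)$.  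The principal technical obstacle in both regimes is the closing ``supremum of agreement'' step, since $u$ is only a priori known to be a weak solution on $(s,t_*)$ and could in principle have earlier jumps inside that interval which decouple it from the candidate Strichartz extension $\tilde u$; these potential intermediate jumps are eliminated cleanly using only the weak continuity of $u$ from Remark~\ref{weak2} together with the one-sided semi-Strichartz hypothesis at each such point.
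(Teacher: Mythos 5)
Your overall strategy is essentially the paper's: assume a jump of size less than $\eps_d$ (resp.\ $M(Q)$) at some $t_*>t_0$, observe that the $L^2_x$ deficit accumulating at $t_*$ is correspondingly small (your bookkeeping $\|u(t)-v(t)\|_{L^2_x}^2\to m_--m_+$ via weak continuity of $u$ and strong continuity of $v$ is a clean variant of the paper's use of \eqref{cosine}), construct a Strichartz-class solution across $t_*$ by perturbation --- small-data iteration in the general case, Theorem \ref{scat} from \cite{kvz} plus stability in the radial case --- and contradict the jump via uniqueness and Strichartz mass conservation. The differences are cosmetic: you perturb around the nonlinear local solution $v$ with $v(t_*)=u(t_*)$, while the paper perturbs around the free evolution $e^{i(t-t_2)\Delta}u(t_1)$ (general case) or around the \cite{kvz} evolution of the difference $u(t_2)-u(t_1)$ (radial case); and your ``supremum of agreement'' step re-derives the uniqueness argument of Theorem \ref{main}, which you could simply cite.

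There is, however, one concrete error. The nonlinear estimate
$$\|F(v+\tilde r)-F(v)\|_{L^2_tL^{2d/(d+2)}_x}\lesssim_d\bigl(\|v\|_{L^2_tL^{2d/(d-2)}_x}+\|\tilde r\|_{L^2_tL^{2d/(d-2)}_x}\bigr)^{4/d}\|\tilde r\|_{L^2_tL^{2d/(d-2)}_x}$$
is false as stated: H\"older forces the second factor into $L^\infty_t L^{d/2}_x$ (since $\tfrac{d+2}{2d}=\tfrac{d-2}{2d}+\tfrac{2}{d}$ in space and $\tfrac12=\tfrac12+0$ in time), so the bracket must be $(\|v\|_{L^\infty_tL^2_x}+\|\tilde r\|_{L^\infty_tL^2_x})^{4/d}$. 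That prefactor is of size $M(u_0)^{2/d}$, bounded but not small, so the contraction in the endpoint norm alone does not close from the smallness of $\|v\|_{L^2_tL^{2d/(d-2)}_x([s,t_*])}$ and of $\|r(s)\|_{L^2_x}$. The repair is routine and is exactly what the paper does: run the iteration in the symmetric norm $L^{2(d+2)/d}_{t,x}$ using \eqref{nonendpoint-strichartz}, where the analogous estimate $\|F(v+\tilde r)-F(v)\|_{L^{2(d+2)/(d+4)}_{t,x}}\lesssim_d(\|v\|_{L^{2(d+2)/d}_{t,x}}+\|\tilde r\|_{L^{2(d+2)/d}_{t,x}})^{4/d}\|\tilde r\|_{L^{2(d+2)/d}_{t,x}}$ does hold with matching exponents and both inputs genuinely small. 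A further minor point: in the radial case you assert that $u$ is spherically symmetric, which does not follow for general weak solutions from symmetry of the data; for the semi-Strichartz solution it follows from rotation invariance combined with the uniqueness in Theorem \ref{main}, and this should be said.
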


\begin{remark} A closely related result in the spherically symmetric case was established in \cite[Corollary 1.12]{ktv}, in which it was shown that any blowup of a spherically symmetric Strichartz class solution in two dimensions must concentrate an amount of mass at least equal to the ground state $M(Q)$; the same result in higher dimensions follows by the same argument together with the results in \cite{kvz}.  Indeed, we will use the results in \cite{kvz} to establish the spherically symmetric case of this theorem.  From Example \ref{nonunique} we see that $M(Q)$ cannot be replaced by any larger quantity in the above theorem.

Theorem \ref{quant} is of course consistent to the existence of a lower bound $\eps_d$ for mass concentration at a point, see \cite{borg:book}, \cite{mv}, \cite{keraani}, although neither result seems to directly imply the other.  (The proof of Theorem \ref{quant} uses global-in-space Strichartz estimates, whereas the mass concentration result requires more localized tools.)
\end{remark}

Theorem \ref{quant}, combined with Theorem \ref{main} and Proposition \ref{strichprop}(iii), has an immediate corollary:

\begin{corollary}\label{quantcor}  If $u$ is a global semi-Strichartz class solution to \eqref{nls}, then the function $t \mapsto M(u(t))$ is piecewise constant with at most finitely many jump discontinuities, with $u$ being a Strichartz class solution on each of the piecewise constant intervals.
\end{corollary}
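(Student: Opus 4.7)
The plan is to combine the three cited results directly: Theorem \ref{main} gives monotonicity and locates jumps at non-Strichartz times, Theorem \ref{quant} gives a uniform lower bound on each jump, and Proposition \ref{strichprop}(iii) gives mass conservation inside Strichartz intervals. The pigeonhole on total mass will then cap the number of jumps, and a gluing argument will promote local Strichartz regularity to Strichartz regularity on each constancy interval.

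First I would apply Theorem \ref{main} to write $M(u(t))$ as a monotone non-increasing function of $t$ on $[t_0,+\infty)$ and a monotone non-decreasing function of $t$ on $(-\infty,t_0]$, whose only possible discontinuities are jumps. Next I would apply Theorem \ref{quant} to assert that each such jump has size at least $\eps_d > 0$. Since $0 \leq M(u(t)) \leq M(u(t_0))$ on all of $\R$ by monotonicity and nonnegativity, and since on each side of $t_0$ the total decrease in mass is at most $M(u(t_0))$, the number of jump discontinuities on each side is bounded by $M(u(t_0))/\eps_d$. Hence there are only finitely many jump times, which I would label $t_1 < t_2 < \cdots < t_n$.

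On each of the open sub-intervals $(-\infty,t_1)$, $(t_i, t_{i+1})$, and $(t_n,+\infty)$ the mass function is continuous, so by the second conclusion of Theorem \ref{main}, at every point of such a sub-interval $u$ is locally a Strichartz class solution (in the one-sided sense dictated by the definition of a semi-Strichartz solution). Covering the open sub-interval by these one-sided Strichartz-class neighborhoods and invoking the gluing statement in Remark \ref{trivial}, I would then conclude that $u$ is a Strichartz class solution on the whole sub-interval. Proposition \ref{strichprop}(iii) finally yields that $M(u(t))$ is constant there, completing the corollary.

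The only mildly delicate point is the gluing step: one must check that the one-sided local Strichartz property available at each interior point of a constancy interval is enough to produce a two-sided Strichartz solution on that interval. This is immediate once one notices that any two such one-sided neighborhoods that overlap produce a common Strichartz extension (by Remark \ref{trivial}), so the union over all points of the sub-interval gives the desired global-on-the-interval Strichartz solution. Everything else is routine bookkeeping.
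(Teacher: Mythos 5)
Your proof is correct and follows exactly the route the paper intends: the paper states Corollary \ref{quantcor} as an immediate consequence of Theorem \ref{main} (monotonicity and the characterisation of jumps as non-Strichartz times), Theorem \ref{quant} (each jump at least $\eps_d$, hence at most $M(u_0)/\eps_d$ jumps on each side of $t_0$), and Proposition \ref{strichprop}(iii), with the gluing via Remark \ref{trivial} being routine. Your worry about one-sidedness is not even needed, since the paper's proof of Theorem \ref{main} shows that continuity of the mass at $t_1$ yields a Strichartz-class solution on a full two-sided neighbourhood of $t_1$.
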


We prove Theorem \ref{quant} in Section \ref{quant-sec}.

\subsection{The spherically symmetric case}

Now we turn to the question of whether strong (resp. semi-strong) solutions are necessarily in the Strichartz class (resp. semi-Strichartz class), which would imply (by Proposition \ref{strichprop} and Theorem \ref{main}) that they are unique.  These type of results are known as \emph{unconditional uniqueness} (or \emph{unconditional well-posedness}) results in the literature.  For solutions in higher regularities, such as the energy class, one can obtain unconditional uniqueness by exploiting Sobolev embedding to obtain additional integrability of the strong solution $u$; see \cite{katounique}, \cite{katounique2}, \cite{furioli}, \cite{furioli2}, \cite{caz}, \cite{tsutsumi}.  Unfortunately at the $L^2_x(\R^d)$ level of regularity, for which Sobolev embedding is not available\footnote{Related to this difficulty is the Galilean invariance of the NLS equation at $L^2_x(\R^d)$, which strongly suggests that direct application of Sobolev or Littlewood-Paley theory is unlikely to be helpful.}, it appears to be rather difficult to establish such an unconditional uniqueness result, although the author tentatively conjectures it to be true.  On the other hand, we were able to establish this uniqueness under the additional simplifying assumption of spherical symmetry (and assuming very high dimension $d \geq 5$), thus replacing the data space $L^2_x(\R^d)$ by the subspace $L^2_\rad(\R^d)$ of spherically symmetric functions:

\begin{theorem}[Unconditional uniqueness for spherically symmetric solutions]\label{uncond}  Let $d \geq 5$, $u_0 \in L^2_\rad(\R^d)$, $I$ be an interval, and $t_0 \in \R$.  Let $u \in L^\infty_t L^2_x(I \times \R^d)$ be a spherically symmetric weak solution to \eqref{nls}.  Then the following are equivalent:
\begin{itemize}
\item[(i)] $u$ is a Strichartz class solution.
\item[(ii)] $u$ is a strong solution.
\item[(iii)] The function $t \mapsto M(u(t))$ is constant.
\item[(iv)] The function $t \mapsto M(u(t))$ is continuous.
\item[(v)] One has $M(u(t)) \geq \limsup_{t' \to t} M(u(t')) - \eps_d$ for all $t \in I$, where $\eps_d > 0$ is a suitably small absolute constant depending only on $d$.  (Note from lower semi-continuity that we automatically have $M(u(t)) \leq \limsup_{t' \to t} M(u(t'))$.)
\end{itemize}
\end{theorem}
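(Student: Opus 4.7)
The plan is to establish the cycle $(\text{i}) \Rightarrow (\text{ii}) \Rightarrow (\text{iv}) \Rightarrow (\text{v}) \Rightarrow (\text{i})$, supplemented by $(\text{i}) \Rightarrow (\text{iii}) \Rightarrow (\text{iv})$. All arrows other than $(\text{v}) \Rightarrow (\text{i})$ are essentially immediate: $(\text{i}) \Rightarrow (\text{ii})$ and $(\text{iii}) \Rightarrow (\text{iv}) \Rightarrow (\text{v})$ are trivial, $(\text{ii}) \Rightarrow (\text{iv})$ follows because $L^2_x$-continuity of $u$ makes $M(u(\cdot))$ continuous, and $(\text{i}) \Rightarrow (\text{iii})$ is Proposition \ref{strichprop}(iii). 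The entire content of the theorem therefore resides in the implication $(\text{v}) \Rightarrow (\text{i})$.

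For $(\text{v}) \Rightarrow (\text{i})$ I would first reduce to a local statement at each time. For each $t \in I$, Proposition \ref{strichprop}(i) yields a Strichartz-class solution $v$ on a one-sided neighborhood of $t$ with $v(t) = u(t)$; if $u \equiv v$ on this neighborhood, then the local Strichartz pieces glue via Remark \ref{trivial} to put $u$ itself in the Strichartz class on $I$. The task thus reduces to proving an \emph{unconditional uniqueness} statement at the $L^2$ level for radial weak solutions in $d \geq 5$: if $u$ is a radial weak solution and $v$ a Strichartz-class solution with $u(t) = v(t)$ (in the canonical sense of Remark \ref{weakconv}), then $u \equiv v$ on a neighborhood of $t$.

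The natural attack on this uniqueness statement is a continuity/bootstrap argument on $w := u - v$. Since $w(t)=0$ and $w$ satisfies
\begin{equation*}
w(s) \;=\; i\int_t^s e^{i(s-t')\Delta}\bigl[F(u(t'))-F(v(t'))\bigr]\,dt',
\end{equation*}
the endpoint Strichartz estimate \eqref{endpoint-strichartz} together with the pointwise bound $|F(u)-F(v)| \lesssim (|v|^{4/d} + |w|^{4/d})|w|$ and the smallness of $\|v\|_{L^2_t L^{2d/(d-2)}_x([t,s])}$ as $s\to t$ would close a standard bootstrap, \emph{if} we knew $\|w\|_{L^2_t L^{2d/(d-2)}_x([t,s])}$ were finite (and small) to begin with. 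The circular difficulty is that $u \in L^\infty_t L^2_x$ grants no Strichartz bound a priori, so the bootstrap cannot be initialized from the weak-solution hypothesis alone.

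The way I would break this circularity uses both radial symmetry and the restriction $d \geq 5$. For a weak solution one has $F(u) \in L^\infty_t L^{2d/(d+4)}_x$, and weighted, extended-range radial Strichartz estimates (of Stein--Tomas / Ozawa type, whose range of admissible weights becomes generous in high dimensions) allow one to place the Duhamel integral in norms of the form $\||x|^\alpha \cdot\|_{L^q_t L^r_x}$ outside the standard admissible range, yielding a priori Strichartz-type control of $u$ on $\{|x|>R\}$. The main obstacle I expect is the complementary region $\{|x|\le R\}$ near the origin, where the radial weights give no improvement; here one has to combine hypothesis (v) with the cosine identity \eqref{cosine}, which forces $\limsup_{t'\to t} M(u(t')-u(t)) \le \eps_d$, together with the weak $L^2_x$-continuity of $u$ from Remark \ref{weak2}, to bound the near-origin mass of $u(s)$ uniformly on a short interval $[t,t+\delta]$. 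Choosing $\eps_d$ below the Cazenave--Weissler small-data Strichartz threshold lets the near-origin contribution be absorbed as a small-mass perturbation of $v$, closing the bootstrap and forcing $w \equiv 0$ on a neighborhood of $t$.
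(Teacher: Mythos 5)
Your reduction of the theorem to the implication (v) $\Rightarrow$ (i), and of that implication to a local comparison between $u$ and a Strichartz-class solution $v$ with $v(t)=u(t)$, matches the paper; so does your use of \eqref{cosine} together with hypothesis (v) to make $\|u-v\|_{L^\infty_t L^2_x}$ small, and your recognition that radial symmetry must supply an a priori Strichartz-type bound on $u$ away from the origin (this is the paper's Theorem \ref{smthm}, proved via the in/out decomposition of Proposition \ref{inout} rather than weighted radial Strichartz estimates, but that is a defensible alternative route to the same exterior estimate). The genuine gap is in your treatment of the region $\{|x|\le R\}$. Bounding the near-origin \emph{mass} of $u(s)$ by $O(\eps_d)$ uniformly in $s$ --- which \eqref{cosine} does give you --- yields no control whatsoever on the $L^2_t L^{2d/(d-2)}_x$ norm of $w=u-v$ near the origin, and ``absorbing the near-origin contribution as a small-mass perturbation of $v$'' is exactly the circular step you yourself flagged two paragraphs earlier: a small-data or perturbative argument needs either smallness of a \emph{Strichartz} norm or a priori finiteness of that norm to run a continuity argument, and neither is available for a weak solution near $x=0$. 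The pseudoconformal solution \eqref{pc} shows that bounded (even small, after subtracting $v$) mass is compatible with infinite Strichartz norm concentrated at the spatial origin; this is also why Theorem \ref{strich} fails for large $\eps$.

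What the paper actually does to cross this gap is an induction on dyadic spatial scales. Setting $c_k := \|w\|_{L^2_t L^{2d/(d-2)}_x(I \times (\R^d \setminus B(0,2^k)))}$, the smoothing theorem gives the a priori bound $c_k \lesssim 2^{-k}+1$, and the kernel estimates for $P_\pm P_N e^{\mp i t\Delta}$ show that the part of the nonlinearity supported on the annulus of radius $2^j$ contributes to $c_k$ only $O\bigl(\eps^{4/d}\, 2^{-\frac{d-2}{2}(k-j)}(c_j + c_j^{1-4/d})\bigr)$ (Proposition \ref{key}). The hypothesis $d\ge 5$ is used precisely here: the off-diagonal decay $2^{-\frac{d-2}{2}(k-j)}$ must dominate the a priori growth $c_j \lesssim 2^{-j}$ as $j\to-\infty$, which requires $\frac{d-2}{2}>1$. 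Iterating the resulting self-improving inequality gives $\sup_k c_k<\infty$, i.e.\ Strichartz control all the way down to the origin. Some quantitative mechanism of this kind --- propagating the exterior control inward scale by scale, with a gain that beats the worst-case interior growth --- is the essential idea missing from your proposal; without it the argument does not close, and the role of $d\geq 5$ is left unexplained.
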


\begin{example} If $u$ is given by \eqref{pc} for $t \neq 0$ and a spherically symmetric $Q$ and vanishes for $t=0$, then $u$ is a spherically symmetric weak solution but fails to conserve mass at $t=0$, and is thus not in the Strichartz class in a neighbourhood of $t=0$.
\end{example}

\begin{remark}
From Theorem \ref{uncond} and Proposition \ref{strichprop}(ii), we see that spherically symmetric strong solutions to \eqref{nls} are unique.  Another quick corollary of Theorem \ref{uncond} is that any spherically symmetric weak solution whose mass is always strictly smaller than $\eps_d$ is necessarily a Strichartz class solution (and hence strong solution also), and thus also unique.  In view of Theorem \ref{quant}, it is natural to conjecture that one can take $\eps_d$ to be the mass $M(Q)$ of the ground state, which is the limit of weak uniqueness thanks to Example \ref{nonunique}, but our methods do not give this.
\end{remark}

\begin{remark} The above theorem shows that if a weak solution fails to be in the Strichartz class, then at some time $t$ it must lose at least a fixed amount $\eps_d$ of mass, though it is possible that this mass is then instantly recovered (consider for instance the solution given by \eqref{pc} for $t \neq 0$ and zero for $t=0$).  On the other hand, it is conceivable that there exist weak solutions in which the mass function $t \mapsto M(u(t))$ exhibits oscillating singularities rather than jump discontinuities, in which the mass oscillates infinitely often as one approaches a given time; the above theorem implies that the net oscillation is at least $\eps_d$ but does not otherwise control the behaviour of this function.  If for instance there existed a non-trivial weak solution on a compact interval $I$ which vanished at both endpoints of the interval (cf. \cite{SchefferNonunique}), then one could achieve such an oscillating behaviour by gluing together rescaled, time-translated versions of this solution.  However, we do not know if such weak solutions exist; solutions such as \eqref{pc} constructed using the pseudo-conformal transformation only exhibit vanishing at a single time $t$.
\end{remark}

\begin{remark} Note that we need to assume the solution is spherically symmetric, and not just the initial data.  In the category of weak solutions, at least, it is not necessarily the case that spherically symmetric data leads to spherically symmetric solutions: consider for instance the weak solution which is equal to a time-translated version of \eqref{pc} for $t \neq 0$ and vanishes for $t=0$; this solution is spherically symmetric at time zero but not at other times.
\end{remark}

We prove Theorem \ref{uncond} in Section \ref{uncond-sec}, after establishing an important preliminary smoothing estimate for weak solutions in Section \ref{weak-sec}.  Our main tool here is the in/out decomposition of waves used in \cite{tao:radialfocus}, \cite{ktv}, which is particularly powerful for understanding the dispersion of spherically symmetric waves, and which we will rely upon heavily in order to establish a substantial gain of regularity for weak solutions.  Our arguments only barely fail at $d=4$ and it is quite likely that a refinement of the methods here can be extended to that case, but we do not pursue this matter here.

There is an analogue of Theorem \ref{uncond} for semi-strong and semi-Strichartz class solutions:

\begin{theorem}[Characterisation of spherically symmetric semi-Strichartz solution]\label{semis} Let $d \geq 5$, $u_0 \in L^2_\rad(\R^d)$, $I$ be an interval, and $t_0 \in \R$.  Let $u \in L^\infty_t L^2_x(I \times \R^d)$ be a spherically symmetric weak solution to \eqref{nls}.  Then the following are equivalent:
\begin{itemize}
\item[(i)] $u$ is the unique semi-Strichartz solution given by Theorem \ref{main} (restricted to $I$, of course).
\item[(ii)] $u$ is a semi-strong solution.
\item[(iii)] The function $t \mapsto M(u(t))$ is right-continuous for $t \geq t_0$ and left-continuous for $t \leq t_0$, and is piecewise constant with only finitely many jump discontinuities, with each jump being at least $M(Q)$ in size.
\item[(iv)] The function $t \mapsto M(u(t))$ is right-continuous for $t \geq t_0$ and left-continuous for $t \leq t_0$.
\item[(v)] One has $M(u(t)) \geq \limsup_{t' \to t^+} M(u(t')) - \eps_d$ for all $t \geq t_0$ and $M(u(t)) \geq \limsup_{t' \to t^-} M(u(t')) - \eps_d$ for all $t \leq t_0$, where $\eps_d > 0$ is a suitably small absolute constant depending only on $d$. 
\end{itemize}
\end{theorem}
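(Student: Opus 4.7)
My strategy is to close the cycle $(i)\Rightarrow(iii)\Rightarrow(iv)\Rightarrow(v)\Rightarrow(ii)\Rightarrow(i)$, with the trivial $(i)\Rightarrow(ii)$ supplied by the inclusion semi-Strichartz $\subset$ semi-strong in Figure \ref{include}. The main inputs are Theorem \ref{main}, Theorem \ref{quant} in its sharper spherically symmetric form $\eps_d=M(Q)$, and Theorem \ref{uncond}.

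For $(i)\Rightarrow(iii)$: by Theorem \ref{main}, on each side of $t_0$ the mass $M(u(\cdot))$ is monotone, so its only discontinuities are jumps, occurring exactly where $u$ fails to be locally Strichartz. The semi-Strichartz property itself forces $u$ to be Strichartz-class on $[t,t+\eps)$ for $t\ge t_0$ (resp. $(t-\eps,t]$ for $t\le t_0$), making $M(u(\cdot))$ locally constant and hence one-sidedly continuous at each $t$ in the corresponding direction. Theorem \ref{quant} in the spherically symmetric case bounds every jump below by $M(Q)$, and the uniform bound $M(u(\cdot))\le M(u_0)$ caps the total number of jumps. The chain $(iii)\Rightarrow(iv)\Rightarrow(v)$ is then immediate.

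For $(ii)\Rightarrow(i)$: fix $t\ge t_0$ and let $\eps>0$ be such that $u|_{[t,t+\eps)}$ is a strong solution. This restriction is a spherically symmetric weak solution satisfying hypothesis (ii) of Theorem \ref{uncond}, so that theorem promotes it to a Strichartz-class solution; symmetrically for $t\le t_0$. Hence $u$ is semi-Strichartz, and Theorem \ref{main}'s uniqueness clause identifies it with the canonical semi-Strichartz solution.

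The main obstacle is $(v)\Rightarrow(ii)$. Fix $t\ge t_0$ (the case $t\le t_0$ being symmetric under time reversal), and let $\tilde u$ be the maximal forward Strichartz Cauchy development with $\tilde u(t)=u(t)$ on $[t,t+T_*)$, supplied by Proposition \ref{strichprop}(i). I claim $u=\tilde u$ on $[t,t+T_*)$, which immediately gives (ii). Set
\[
J := \{s\in[t,t+T_*) : u=\tilde u\text{ almost everywhere on }[t,s]\};
\]
I show $J$ is closed and open in $[t,t+T_*)$. Closedness is the distributional continuity of both solutions from Remark \ref{weakconv}. Openness is the technical crux: at each $s\in J$ one has $u(s)=\tilde u(s)$ in $L^2$ (again by Remark \ref{weakconv}), and applying hypothesis (v) at $s$ together with the cosine identity \eqref{cosine} gives $\limsup_{s'\to s^+} M(u(s')-u(s))\le\eps_d$; this one-sided $L^2$-closeness, combined with the smoothing estimate for spherically symmetric weak solutions of Section \ref{weak-sec}, lets one re-run the Strichartz fixed-point comparison of Section \ref{uncond-sec} in a purely forward-in-time mode and conclude $u\equiv\tilde u$ on a short interval $[s,s+\eta)$. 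The essential difficulty relative to Theorem \ref{uncond} is that the hypothesis here is only one-sided, so one cannot quote Theorem \ref{uncond} as a black box; instead one must verify that the radial smoothing estimate of Section \ref{weak-sec} propagates regularity in a single time direction, which is the genuinely new content of this step.
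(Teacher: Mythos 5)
Your proof is correct and is essentially the paper's argument with the implication graph reshuffled: the paper proves the easy implications (i)$\Rightarrow$(ii), (i)$\Rightarrow$(iii) (via Corollary \ref{quantcor}), (iii)$\Rightarrow$(iv)$\Rightarrow$(v) and (ii)$\Rightarrow$(iv), and then reduces everything to (v)$\Rightarrow$(i), handled as ``a routine modification of the arguments in Section \ref{uncond-sec}''; you instead close the cycle through (v)$\Rightarrow$(ii)$\Rightarrow$(i), with the same hard step at the core. Your (ii)$\Rightarrow$(i) via a local application of Theorem \ref{uncond} on each interval $[t,t+\eps)$ is a clean observation not spelled out in the paper, and your maximal-development continuation argument for (v)$\Rightarrow$(ii) is valid but superfluous: the very first ``openness'' step at $s=t$ already yields the local Strichartz (hence strong) property needed for (ii). One clarification on what you call the ``genuinely new content'': there is no need to re-prove a one-directional version of the radial smoothing estimate. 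Theorem \ref{smthm} and Theorem \ref{strich} are stated for arbitrary compact intervals and are time-symmetric as they stand; the only adaptation of Section \ref{uncond-sec} required is to take the interval to be $[t,t+\delta]$ rather than a two-sided neighbourhood of $t$, noting that hypothesis (v) together with \eqref{cosine} still furnishes the $L^\infty_t L^2_x$ closeness of $u$ to the forward Strichartz development of $u(t)$ on that one-sided interval. That is exactly the ``routine modification'' the paper intends, so your overall argument goes through.
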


We prove Theorem \ref{semis} in Section \ref{semis-sec}, using a minor modification of the argument used to prove Theorem \ref{uncond}.

The author thanks Jim Colliander for helpful discussions, and Tim Candy, Fabrice Planchon and Monica Visan for corrections and references, and the anonymous referee for valuable comments.  The author is supported by NSF grant DMS-0649473 and a grant from the Macarthur Foundation.

\section{Proof of Theorem \ref{main}}\label{mainproof}

We first establish uniqueness.  Suppose we have two semi-Strichartz class solutions $u, u' \in L^\infty_t L^2(I \times \R^2)$ to \eqref{nls}.  Let $J$ be the connected component of $\{ t \in I: u(t) = u(t')\}$ that contains $t_0$.  Since $u, u'$ are weak solutions, we see from Remark \ref{weak2} that $J$ is closed.  From the uniqueness component of Proposition \ref{strichprop}, and Definition \ref{semidef}, we also see that $J$ is right-open in $I \cap [t_0,+\infty)$ (i.e. for each $t \in J \cap [t_0,+\infty)$ there exists $\eps > 0$ such that $I \cap [t,t+\eps) \subset J$) and left-open in $I \cap (-\infty,t_0]$; by connectedness we conclude that $J=I$, establishing uniqueness.

Now we establish global existence.  It suffices to establish a semi-Strichartz class solution on $[t_0,+\infty)$, as by time reversal symmetry we may then obtain a semi-Strichartz class solution and $(-\infty,t_0]$, and glue them together to obtain the desired global solution on $\R$.

Let $J$ denote the set of all times $T \in [t_0,+\infty)$ for which there exists a semi-Strichartz class solution $u$ on $[t_0,T]$ with $M(u(t))$ monotone non-increasing on $[t_0,T]$, thus $J$ is a connected subset of $[t_0,+\infty)$ containing $t_0$.  By the existence and mass conservation component of Proposition \ref{strichprop}, we see that $J$ is right-open.  Now we establish that $J$ is closed.  If $t_n$ is a sequence of times in $J$ increasing to a limit $t_*$, then by gluing together all the associated semi-Strichartz class solutions (using uniqueness) we obtain a semi-Strichartz solution $u$ on $[t_0,t_*)$ with $M(u(t))$ monotone non-increasing on $[t_0,t_*)$; in particular $u$ lies in $L^\infty_t L^2_x( [t_0,t_*) \times \R^d)$, and $F(u)$ lies in $L^\infty_t L^{2d/(d+4)}_x([t_0,t_*) \times \R^d)$.  From this we see that the right-hand side of \eqref{integral} is continuous all the way up to $t_*$ in the space of tempered distributions, and so we can extend $u$ as a weak solution to $[t_0,t_*]$.  This is still a semi-Strichartz solution, and by Fatou's lemma we see that $M(u(t))$ is still non-decreasing on $[t_0,t_*]$, and so $t_* \in J$, thus establishing that $J$ is closed.  By connectedness we conclude that $J = [t_0,+\infty)$, and so we can obtain semi-Strichartz class solutions on $[t_0,T]$ for any $t_0 \leq T < \infty$.  Gluing these solutions together we obtain the desired solution on $[t_0,+\infty)$, establishing global existence.

The above argument has also established monotonicity of mass.  Whenever $u$ is a Strichartz class solution in a neighbourhood of a time $t_1$, it follows from Proposition \ref{strichprop} that mass is constant near $t_1$, so the only remaining task is to show that whenever mass is continuous at a time $t_1$, then $u$ is a Strichartz class solution in a neighbourhood of $t$.

The claim is obvious for $t_1=t_0$, so without loss of generality we may take $t_1 > t_0$.  By hypothesis, $M(u(t))$ converges to $M(u(t_1))$ as $t \to t_1$.  By \eqref{cosine}, we conclude that $u(t)$ converges strongly to $u(t_1)$ in $L^2_x(\R^d)$.

Let $\eps > 0$ be a small number.  By the endpoint Strichartz estimate \eqref{endpoint-strichartz}, we have 
$$ \| e^{i(t-t_1)\Delta} u(t_1) \|_{L^2_t L^{2d/(d-2)}_x(\R \times \R^d)} < \infty$$
so by the monotone convergence theorem we have
$$ \| e^{i(t-t_1)\Delta} u(t_1) \|_{L^2_t L^{2d/(d-2)}_x(I \times \R^d)} < \eps$$
when $I$ is a sufficiently small neighbourhood of $t_1$.

Fix $I$.  Let $t_2$ converge to $t_1$, then by the previous discussion $u(t_2)$ converges strongly to $u(t_1)$ in $L^2_x(\R^d)$.  By the continuity (and unitarity) of the Schr\"odinger propagator, this implies that $e^{i(t_2-t_1)\Delta} u(t_2)$ converges to $u(t_1)$.  Applying the endpoint Strichartz estimate \eqref{endpoint-strichartz}, we conclude that $e^{i(\cdot-t_2)\Delta} u(t_2)$ converges in $L^2_t L^{2d/(d-2)}_x(I \times \R^d)$ to $e^{i(\cdot-t_1)\Delta} u(t_1)$.  In particular, we have
$$ \| e^{i(t-t_2)\Delta} u(t_2) \|_{L^2_t L^{2d/(d-2)}_x(I \times \R^d)} < \eps$$
for all $t_2$ sufficiently close to $t_1$.  On the other hand, we have $M(u(t_2)) \leq M(u_0)$.  Thus if $\eps$ is chosen to be sufficiently small depending on $M(u_0)$, we may apply the standard Picard iteration argument based on the endpoint Strichartz estimate \eqref{endpoint-strichartz} and construct a Strichartz-class solution to NLS on $I$ which equals $u(t_2)$ at $t_2$.  Applying this with $t_2$ slightly smaller than $t_1$ and using the uniqueness of semi-Strichartz class solutions, we see that $u$ is equal to this Strichartz-class solution on $I$, and the claim follows.

\section{Proof of Theorem \ref{quant}}\label{quant-sec}

It is convenient here to use the original non-endpoint Strichartz estimate
\begin{equation}\label{nonendpoint-strichartz}
\| u \|_{L^{2(d+2)/d}_{t,x}(I \times \R^d)} +
\| u \|_{C^0_t L^2_x(I \times \R^d)} \lesssim_d \|u(t_0)\|_{L^2_x(\R^d)} + \| i u_t + \Delta u \|_{L^2_t L^{2(d+2)/(d+4)}_x(I \times \R^d)}
\end{equation}
of Strichartz \cite{strichartz:restrictionquadratic}.

Let $\eps_d > 0$ be chosen later, and let $u$ be a semi-Strichartz class solution.  Suppose for contradiction that we had a jump discontinuity at some time $t_1$ of jump less than $\eps_d$.  As before we may assume without loss of generality that $t_1 > t_0$.

Let $t$ approach $t_1$ from below, then $M(u(t)) - M(u(t_1))$ converges to a limit less than $\eps_d$.  By \eqref{cosine}, we conclude that $\|u(t)-u(t_1)\|_{L^2_x(\R^d)}$ converges to a limit less than $\eps_d$.

By \eqref{nonendpoint-strichartz} and monotone convergence as before, we can find a small neighbourhood $I$ of $t_1$ such that
$$ \| e^{i(t-t_1)\Delta} u(t_1) \|_{L^{2(d+2)/d}_{t,x}(I \times \R^d)} < \eps_d^{1/2}.$$
If we let $t_2$ approach $t_1$ from below, then for $t_2$ sufficiently close to $t_1$ we thus have
$$ \| e^{i(t-t_2)\Delta} u(t_1) \|_{L^{2(d+2)/d}_{t,x}(I \times \R^d)} < \eps_d^{1/2}.$$
On the other hand, from \eqref{nonendpoint-strichartz} we have (for $t_2$ sufficiently close to $t_1$) that
$$ \| e^{i(t-t_2)\Delta} (u(t_1)-u(t_2)) \|_{L^{2(d+2)/d}_{t,x}(I \times \R^d)} \lesssim_d \|u(t_2) - u(t_1) \|_{L^2_x(\R^d)} \lesssim \eps_d^{1/2}$$
and thus by the triangle inequality
$$ \| e^{i(t-t_2)\Delta} u(t_2) \|_{L^{2(d+2)/d}_{t,x}(I \times \R^d)} \lesssim_d \eps_d^{1/2}.$$
If $\eps_d$ is sufficiently small depending on $d$, we can then perform a Picard iteration, using \eqref{nonendpoint-strichartz} to control the nonlinear portion $u(t) - e^{i(t-t_2)\Delta} u(t_2)$ of the solution, to construct a solution in the class $C^0_t L^2_x \cap L^{2(d+2)/d}_{t,x}(I \times \R^d)$ that equals $u(t_2)$ on $t_2$.  Applying Strichartz estimates once more, we see that this solution is a Strichartz solution on $I$.  By uniqueness of semi-Strichartz solutions, we conclude that $u$ is a Strichartz solution on $I$ and thus has no jump discontinuity at $t_1$, a contradiction.

Now we handle the spherically symmetric case.  We will need the following result from \cite{kvz}:

\begin{theorem}[Scattering below the ground state]\label{scat}  Let $d \geq 3$.  Then for every $0 < m < M(Q)$ there exists a quantity $A(m) < \infty$ such that whenever $t_0 \in \R$ and $u_0 \in L^2_x(\R^d)$ with $M(u_0) \leq m$, then there exists a global Strichartz-class solution $u$ to \eqref{nls} with $\|u\|_{L^2_t L^{2d/(d-2)}_x(\R \times \R^d)} \leq A(m)$.
\end{theorem}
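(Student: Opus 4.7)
The plan is to invoke the concentration-compactness plus rigidity framework of Kenig--Merle, specialized to the mass-critical NLS in the manner of Killip--Tao--Visan and Killip--Visan--Zhang. I would first define
$$A(m) := \sup \bigl\{ \|u\|_{L^2_t L^{2d/(d-2)}_x(\R\times\R^d)} : u \text{ a maximal Strichartz-class solution with } M(u) \leq m \bigr\}$$
and observe that standard small-data Picard iteration based on \eqref{endpoint-strichartz} gives $A(m) < \infty$ for all sufficiently small $m$. Setting $m^* := \sup\{ m : A(m) < \infty \}$, I would suppose for contradiction that $m^* < M(Q)$ and aim to build a non-trivial minimal-mass counterexample.

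The second step is the construction of this minimal blowup solution. Taking a sequence of Strichartz-class solutions $u_n$ with $M(u_n(0)) \to m^*$ but $\|u_n\|_{L^2_t L^{2d/(d-2)}_x} \to \infty$, I would apply Keraani's linear profile decomposition to the initial data $u_n(0)$. A stability/perturbation lemma for \eqref{nls} in the Strichartz class, combined with the hypothesis that $A(m) < \infty$ for all $m < m^*$, would force the decomposition to collapse to a single non-trivial profile; otherwise each profile carries mass strictly below $m^*$ and orthogonal nonlinear evolution assembles a finite spacetime norm for $u_n$, a contradiction. This produces a non-zero minimal-mass solution $u$ of mass exactly $m^*$ whose orbit $\{u(t)\}$ is precompact in $L^2_x(\R^d)$ modulo the full symmetry group of the equation (phase, scaling, spatial translation, and Galilean boost). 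In the spherically symmetric setting translations and boosts are absent, leaving only the scaling parameter $N(t)$ and a phase, which is essential for pushing the analysis through at $L^2$ regularity.

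The rigidity step then classifies the minimal object by the behaviour of $N(t)$ into a soliton-like scenario, a self-similar scenario, or a double low-to-high frequency cascade, and excludes each. The key ingredient would be a regularity bootstrap from $L^2$ to $H^1$ using the compactness of the orbit together with radial Strichartz and in/out decomposition estimates (the same tools that are developed in Section \ref{weak-sec} of the present paper). Once $u(t) \in H^1$, the sharp Gagliardo--Nirenberg inequality together with $M(u) = m^* < M(Q)$ yields positive energy and coercive kinetic control, and a truncated virial identity then forces $u \equiv 0$, contradicting non-triviality.

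I expect the main obstacle to be this regularity bootstrap: at $L^2$ regularity one has neither Sobolev embedding nor an a priori coercive conservation law, and the additional smoothness must be extracted purely from the precompactness of the orbit modulo symmetries together with radial dispersive gains. This is precisely what is carried out in \cite{kvz} in the spherically symmetric case for $d \geq 3$, and I would invoke their argument directly rather than reprove it; the analogous statement without spherical symmetry remains open, consistent with the fact that global existence below $M(Q)$ in the non-symmetric case is conjectural.
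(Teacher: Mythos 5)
Your proposal is correct and ends up exactly where the paper does: the paper's entire proof of this theorem is the citation ``See \cite[Theorem 1.5]{kvz}'', and your (accurate) sketch of the concentration-compactness/rigidity machinery is a description of what that reference proves rather than an alternative argument. The only point worth flagging is that the hypothesis of spherical symmetry on $u_0$, which you correctly identify as essential, should be stated explicitly, since without it the result is still conjectural.
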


\begin{proof} See \cite[Theorem 1.5]{kvz}.  
\end{proof}

Now suppose for contradiction that we have a global semi-Strichartz class solution from spherically symmetric initial data $u_0$ which has a mass jump discontinuity of less than $M(Q)$ at some time $t_1$; we can assume $t_1 > t_0$ as before.

Since $u_0$ is spherically symmetric, we see from rotation invariance and uniqueness that $u$ is spherically symmetric.  By arguing as before, we see that as $t_2$ approaches $t_1$ from below, $M(u(t_2)-u(t_1))$ converges to a limit less than $M(Q)$.  In particular this limit is less than $m$ for some $0 < m < M(Q)$.

Let $\eps > 0$ be a small number depending on $m$ and $M(u_0)$ to be chosen later.  By endpoint Strichartz \eqref{endpoint-strichartz} and monotone convergence as before, we can find a small neighbourhood $I$ of $t_1$ such that
$$ \| e^{i(t-t_1)\Delta} u(t_1) \|_{L^2_t L^{2d/(d-2)}_x(I \times \R^d)} < \eps$$
and thus for $t_2$ sufficiently close to $t_1$
\begin{equation}\label{utd}
\| e^{i(t-t_2)\Delta} u(t_1) \|_{L^2_t L^{2d/(d-2)}_x(I \times \R^d)} < \eps.
\end{equation}
On the other hand, we also have
$$ M(u(t_2)-u(t_1)) < m$$
for $t_2$ sufficiently close to (and below) $t_1$.  By Theorem \ref{scat}, we may thus find a Strichartz-class solution $v$ on $I$ of mass at most $m$ with $v(t_2) = u(t_2) - u(t_1)$ and
$$ \| v \|_{L^2_t L^{2d/(d-2)}_x(I \times \R^d)} \leq A(m).$$
From this and \eqref{utd} and standard perturbation theory (see \cite[Lemma 3.1]{compact}), we may thus find a Strichartz-class solution on $I$ which equals $u(t_2)$ at $t_2$.  Arguing as before we conclude that $u$ has no jump discontinuity at $t_1$, a contradiction.

\begin{remark}  It is conjectured that the spherical symmetry assumption can be removed from Theorem \ref{scat}.  If this conjecture is true, then it is clear that one can take $\eps_d = M(Q)$ in the non-spherically-symmetric case of Theorem \ref{quant} as well.
\end{remark}

\section{A smoothing effect for spherically symmetric weak solutions}\label{weak-sec}

In this section we establish a preliminary smoothing effect for spherically symmetric weak solutions that will be needed to prove Theorems \ref{uncond},\ref{semis}.  More precisely, we show

\begin{theorem}[Smoothing effect]\label{smthm}  Let $d \geq 4$, let $I$ be a compact interval, and let $u \in L^\infty_t L^2_x(I \times \R^d)$ be a spherically symmetric weak solution to NLS with $M(u(t)) \leq m$ for all $t \in I$.  Then for every $R > 0$ one has the bound
\begin{equation}\label{preli}
\| u \|_{L^2_t L^{2d/(d-2)}_x( I \times (\R^d \backslash B(0,R)) )} \lesssim_{I,m,d} R^{-1} + 1,
\end{equation}
where $B(0,R)$ is the ball of radius $R$ centred at the origin.
\end{theorem}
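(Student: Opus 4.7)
The plan is to use the Duhamel representation
\[
u(t) = e^{i(t-t_0)\Delta} u(t_0) + i\int_{t_0}^{t} e^{i(t-s)\Delta} F(u(s))\, ds,
\]
which is guaranteed for weak solutions, and to control the two pieces separately on the exterior region $I \times \{|x|>R\}$ using the spherical symmetry throughout. The main tool, as suggested by the introduction, is the in/out decomposition of radial waves from \cite{tao:radialfocus}, coupled with a dyadic frequency decomposition.

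For the free evolution of the initial datum, I would decompose $u(t_0) = P^+ u(t_0) + P^- u(t_0)$ into outgoing and incoming radial projections and then further decompose dyadically in frequency. The key kernel-type estimate from the radial in/out analysis is that for $t\geq t_0$, the outgoing projection at frequency $N$ stays essentially concentrated in $\{r\gtrsim N(t-t_0)\}$ (and the incoming projection dually moves inward, so that at the positive times relevant for $I\cap[t_0,\infty)$ it has only a small tail outside $B(0,R)$). Summing the dyadic pieces against the weight $\mathbf{1}_{|x|>R}$, together with the fact that radial functions in $\R^d$ with $d\geq 4$ obey an improved Strichartz estimate carrying a power of $|x|^{-1}$, should yield
\[
\bigl\|e^{i(t-t_0)\Delta} u(t_0)\bigr\|_{L^2_t L^{2d/(d-2)}_x(I \times \{|x|>R\})} \lesssim_{I,m,d} R^{-1} \|u(t_0)\|_{L^2_x} + 1,
\]
which is the linear analogue of the claimed bound.

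For the Duhamel term I would set up a self-improving inequality for
\[
X(R) := \|u\|_{L^2_t L^{2d/(d-2)}_x(I \times \{|x|>R\})}.
\]
Splitting the source $F(u(s))=-|u|^{4/d}u$ as $F(u)\mathbf{1}_{|x|\leq R/2}+F(u)\mathbf{1}_{|x|>R/2}$, the piece on the inside is a radial source whose forward Schr\"odinger evolution, by the same in/out propagation used above, contributes a term of size $O(R^{-1})$ to $X(R)$; the piece on the outside is estimated using the dual endpoint Strichartz estimate combined with H\"older, using $\|u\|_{L^\infty_t L^2_x(I \times \R^d)}\leq m^{1/2}$ to trade all but one factor of $u$ in $|u|^{4/d}u$ for the mass bound, so that
\[
\|F(u)\mathbf{1}_{|x|>R/2}\|_{L^2_t L^{2d/(d+2)}_x} \lesssim_{m,d} |I|^{\theta} X(R/2)
\]
for some $\theta>0$. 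Iterating the resulting recursion $X(R)\lesssim R^{-1}+C(I,m,d)|I|^\theta X(R/2)$ along a dyadic scale --- subdividing $I$ into finitely many subintervals so that $C|I|^\theta<1/2$ and reassembling the pieces at the end using finite additivity of the $L^2_t L^{2d/(d-2)}_x$ norm --- produces the claimed inequality \eqref{preli}.

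The principal obstacle is the nonlinear step: since $u$ lies only in $L^\infty_t L^2_x$, the source $F(u)$ is not globally in any Strichartz dual space, and the closure of the bootstrap has to come entirely from the spatial gain produced by radial symmetry. Making the constant multiplying $X(R/2)$ strictly less than one uniformly in $R$ is what forces one to interpolate the in/out kernel bounds with the radial Sobolev/Strichartz improvement in $|x|$; this is precisely where the hypothesis $d\geq 4$ is needed, since the radial spatial gain must dominate the exponent $1+4/d$ appearing in $F$, and it is indeed just enough in that range.
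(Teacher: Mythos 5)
Your treatment of the free evolution is fine (the paper in fact gets away with plain endpoint Strichartz for the frequencies $N>1/R$ and Bernstein for $P_{\leq 1/R}u$, the latter supplying the $R^{-1}$), and your handling of the source supported in $B(0,R/2)$ via in/out kernel bounds is in the right spirit. The gap is in the exterior source term, where the contraction you are counting on does not exist. With only the mass bound available to absorb the extra factor $|u|^{4/d}$, H\"older in space forces you onto the endpoint dual pair: $\| |u|^{1+4/d} 1_{\{|x|>R/2\}}\|_{L^{2d/(d+2)}_x} \leq \|u\|_{L^{2d/(d-2)}_x(|x|>R/2)} \|u\|_{L^2_x}^{4/d}$, and taking $L^2_t$ of both sides gives $\|F(u)1_{\{|x|>R/2\}}\|_{L^2_t L^{2d/(d+2)}_x} \lesssim m^{2/d} X(R/2)$ with \emph{no} power of $|I|$: the estimate is invariant under the mass-critical scaling, so subdividing $I$ gains nothing, and since $m$ is arbitrary the coefficient of $X(R/2)$ cannot be made smaller than $1/2$. (Trying a non-endpoint dual norm to buy a factor $|I|^{1/q'-1/2}$ forces, by the same H\"older bookkeeping, either back to the endpoint or to a power $X(R/2)^a$ with $a<1$.) Moreover, even granting a contraction constant below $1/2$, the recursion $X(R)\lesssim R^{-1}+\tfrac12 X(R/2)$ runs in the wrong direction: unwinding it requires a priori finiteness of $X(\rho)$ for arbitrarily small $\rho$, i.e.\ essentially the full Strichartz norm of $u$ on $I$, which is exactly what a weak solution need not possess. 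A bootstrap of precisely this shape does appear in the paper, but only in Theorem \ref{strich}, where it carries a genuine small constant $\eps^{4/d}$ (from the hypothesis that $u$ is $\eps$-close to a Strichartz solution), is seeded by the a priori bound \eqref{cmb} furnished by Theorem \ref{smthm} itself, and needs $d\geq 5$ to sum; none of these three ingredients is available in the present theorem, which is stated for arbitrary mass and $d\geq 4$.

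The paper's proof is non-perturbative and needs no smallness or iteration. After disposing of $P_{\leq 1/R}u$ by Bernstein and of the linear Duhamel term by \eqref{endpoint-strichartz}, one treats the whole nonlinearity at once: $\|F(u(t'))\|_{L^{2d/(d+4)}_x}\lesssim \|u(t')\|_{L^2_x}^{1+4/d}\lesssim_m 1$, and the kernel bounds of Proposition \ref{inout} show that $P_-P_N e^{i(t-t')\Delta}$, evaluated on an annulus $\{|x|\sim R'\}$ with $R'\geq R$ and $N>1/R$, maps $L^1_x(\R^d)\to L^\infty_x$ with a bound that is rapidly decaying except in the time window $|t-t'|\sim R'/N$. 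Interpolating with the trivial $L^2\to L^2$ bound, applying H\"older on the annulus, and integrating in $t'$ yields $O\bigl(R'^{-1}(R'N)^{-(d-2)/d}\bigr)$ per frequency, which is summable over $N>1/R$ and over the dyadic annuli. If you want to repair your argument, you should apply this absolutely convergent kernel estimate to the entire source rather than reserving it for the piece inside $B(0,R/2)$; the inner/outer splitting and the contraction in $R$ then become unnecessary.
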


\begin{remark}\label{smrem}  Theorem \ref{smthm} asserts that a spherically symmetric weak solution behaves like a Strichartz-class solution away from the spatial origin.  The $R^{-1}$ term on the right-hand side is sharp, as can be seen by considering a rescaled stationary soliton $u(t,x) = R^{-d/2} e^{it/R^2} Q(x/R)$, where $Q$ is a non-trivial spherically symmetric solution to \eqref{gse}.
\end{remark}

We shall prove this theorem using the method of in/out projections, as used in \cite{tao:radialfocus}, \cite{ktv}, \cite{kvz}.  We first recall some Littlewood-Paley notation.

Let $\varphi(\xi)$ be a radial bump function supported in the ball $\{ \xi \in \R^d: |\xi| \leq \tfrac {11}{10} \}$ and equal to $1$
on the ball $\{ \xi \in \R^d: |\xi| \leq 1 \}$.  For each number $N > 0$, we define the Fourier multipliers
\begin{align*}
\widehat{P_{\leq N} f}(\xi) &:= \varphi(\xi/N) \hat f(\xi)\\
\widehat{P_{> N} f}(\xi) &:= (1 - \varphi(\xi/N)) \hat f(\xi)\\
\widehat{P_N f}(\xi) &:= \psi(\xi/N)\hat f(\xi) := (\varphi(\xi/N) - \varphi(2\xi/N)) \hat f(\xi).
\end{align*}
We similarly define $P_{<N}$ and $P_{\geq N}$.   All sums over $N$ will be over integer powers of two unless otherwise stated.

We now subdivide the Littlewood-Paley projections $P_N$ on the spherically symmetric space $L^2(\R^d)_{\rad}$ into two components, an ``outgoing projection'' $P_+ P_N$ and ``incoming projection'' $P_- P_N$, as described in the following lemma:

\begin{proposition}[In/out decomposition]\label{inout}  Let $d \geq 1$.  Then there exist bounded linear operators $P_+, P_-: L^2(\R^d) \to L^2(\R^d)$ with the following properties:
\begin{itemize}
\item[(i)] $P_+, P_-$ extend to bounded linear operators on $L^p(\R^d)$ to $L^p(\R^d)$ for every $1< p < \infty$.
\item[(ii)] $P_+ + P_-$ is the orthogonal linear projection from $L^2(\R^d)$ to $L^2(\R^d)_{\rad}$.
\item[(iii)] For any $N > 0$, $|x| \gtrsim N^{-1}$, $t \gtrsim N^{-2}$, and choice of sign $\pm$, the integral kernel\footnote{The integral kernel $T(x,y)$ of a linear operator $T$ is the function for which $Tf(x) = \int_{\R^d} K(x,y) f(y)\ dy$ for all test functions $f$.} $[P_\pm P_N e^{\mp it\Delta}](x,y)$ obeys the estimate
$$ |[P_\pm P_N e^{\mp it\Delta}](x,y)| \lesssim_d (|x| |y|)^{-(d-1)/2} |t|^{-1/2}$$
when $|y|-|x| \sim N |t|$, and
$$ |[P_\pm P_N e^{\mp it\Delta}](x,y)| \lesssim_{d,m} \frac{N^d}{(N|x|)^{(d-1)/2} \langle N|y| \rangle^{(d-1)/2}} \langle N^2 t + N|x| - N|y| \rangle^{-m} $$
for any $m \geq 0$ otherwise.
\item[(iii)] For any $N > 0$, $|x| \gtrsim N^{-1}$, $|t| \lesssim N^{-2}$, and choice of sign $\pm$, we have
$$ |[P_\pm P_N e^{\mp it\Delta}](x,y)| \lesssim_{d,m} \frac{N^d}{(N|x|)^{(d-1)/2} \langle N|y| \rangle^{(d-1)/2}} \langle N|x| - N|y| \rangle^{-m} $$
for any $m \geq 0$.
\end{itemize}
\end{proposition}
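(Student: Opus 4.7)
The plan is to construct $P_\pm$ via the Hankel transform representation of radial functions and then verify the kernel estimates by stationary phase analysis, following the framework in \cite{tao:radialfocus} and \cite{ktv}.

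For a radial $L^2$ function with profile $F(r) = f(x)$, $|x|=r$, Fourier inversion becomes an inverse Hankel transform of order $\nu := (d-2)/2$ built from the Bessel function $J_\nu$. The decomposition $J_\nu = \tfrac12(H^{(1)}_\nu + H^{(2)}_\nu)$ suggests defining $P_+$ (respectively $P_-$) on the radial subspace by replacing $J_\nu$ in the inverse Hankel transform by $\tfrac12 H^{(2)}_\nu$ (respectively $\tfrac12 H^{(1)}_\nu$), where the assignment is fixed so that $P_+$ corresponds to spatially outgoing waves under the forward Schr\"odinger flow $e^{-it\Delta}$. Extend $P_\pm$ to all of $L^2(\R^d)$ by precomposing with the orthogonal projection onto the radial subspace. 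Property (ii) is then immediate from $H^{(1)}_\nu + H^{(2)}_\nu = 2 J_\nu$.

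For the kernel bounds, the kernel of $P_\pm P_N e^{\mp it\Delta}$ takes, up to constants, the form
\[
(|x||y|)^{-\nu} \int_0^\infty e^{\mp i t \rho^2} \psi(\rho/N)\, H_\nu^{\#}(|x|\rho)\, J_\nu(|y|\rho)\, \rho^{2\nu+1}\, d\rho,
\]
where $H_\nu^{\#}$ is $H^{(2)}_\nu$ for the $+$ choice and $H^{(1)}_\nu$ for the $-$ choice. Since $|x| \gtrsim N^{-1}$ and $\rho \sim N$ on the support of $\psi(\cdot/N)$, the argument $|x|\rho \gtrsim 1$, so we may invoke the large-argument asymptotic $H_\nu^{(1,2)}(z) \sim \sqrt{2/(\pi z)}\, e^{\pm i(z - \nu\pi/2 - \pi/4)}$; for $J_\nu(|y|\rho)$ use the same asymptotic when $|y|\rho \gtrsim 1$, and the uniform bound $|J_\nu(z)| \lesssim z^\nu \langle z\rangle^{-\nu-1/2}$ when $|y|\rho \lesssim 1$. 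After these substitutions, the integrand reduces to an oscillatory integral with phase $\phi(\rho) = \mp t\rho^2 + (|x|-|y|)\rho$ and amplitude whose pointwise size at $\rho \sim N$ produces exactly the prefactor $N^d (N|x|)^{-(d-1)/2} \langle N|y|\rangle^{-(d-1)/2}$ appearing in the stated estimates. The stationary point is $\rho_0 = (|x|-|y|)/(\pm 2t)$. When $|\rho_0| \sim N$, i.e.\ $||y|-|x|| \sim N|t|$, standard stationary phase with $|\phi''| \sim |t|$ contributes the factor $|t|^{-1/2}$, yielding the first estimate in (iii). Otherwise $|\phi'(\rho)| \gtrsim |N^2 t + N|x| - N|y||/N$ on the support of $\psi(\cdot/N)$, and $m$ non-stationary phase integrations by parts (each derivative of the amplitude or cutoff costing a factor $N^{-1}$) produce the decay $\langle N^2 t + N|x| - N|y|\rangle^{-m}$, giving the second estimate in (iii). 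For (iv), when $|t| \lesssim N^{-2}$ the quadratic term satisfies $|N^2 t| \lesssim 1$, so the bracket above collapses (up to constants) to $\langle N|x|-N|y|\rangle$, and the same integration by parts yields the claim.

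For (i), observe that $P_+ - P_-$ on the radial subspace is essentially a Hilbert-type singular integral operator on $(0,\infty)$ with measure $r^{d-1} dr$: the standard Calder\'on--Zygmund kernel estimates can be read off directly from the $t=0$ case of the analysis above (noting that the spherical Riesz-projection structure is preserved under composition with Littlewood--Paley pieces), and $L^p$ boundedness for $1 < p < \infty$ then follows from the standard theory. I expect the main technical obstacle to lie in the careful bookkeeping of sign conventions in the Hankel decomposition and the transitions between regimes in the Bessel/Hankel asymptotics, particularly the small-argument regime $|y|\rho \lesssim 1$ (which needs a separate but strictly easier analysis, as the Bessel factor is then just a well-behaved polynomial in $|y|\rho$). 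The subleading terms in the Hankel expansions yield oscillatory integrals of the same form but with amplitudes decaying faster in $|x|\rho$, so they only improve the stationary phase bounds.
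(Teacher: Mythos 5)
Your construction and your analysis of (iii)--(iv) follow exactly the route of the proofs this paper is citing: the paper itself offers no argument for this proposition but imports it from \cite{ktv} and \cite{kvz}, whose proofs proceed precisely by writing the radial projection through the Hankel transform, splitting $J_\nu = \tfrac12(H^{(1)}_\nu + H^{(2)}_\nu)$, invoking the large-argument Bessel/Hankel asymptotics for $|x|\rho \gtrsim 1$, and running stationary/non-stationary phase in $\rho$. Your oscillatory-integral bookkeeping is essentially correct, with one omission worth making explicit: $J_\nu(|y|\rho)$ contributes \emph{two} phases $e^{\pm i|y|\rho}$, so besides the branch with phase $\mp t\rho^2 + (|x|-|y|)\rho$ that you analyse there is a branch with phase $\mp t\rho^2 + (|x|+|y|)\rho$, which has its own stationary point when $|x|+|y| \sim 2N|t|$. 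One must verify that, with the sign convention forced on $P_\pm$, this second branch is never stationary on the support of $\psi(\cdot/N)$ (its stationarity condition has the wrong sign); otherwise the bound $\langle N^2 t + N|x|-N|y|\rangle^{-m}$ would fail, e.g.\ at $|x|=|y|\sim N|t|$. This is part of the ``sign bookkeeping'' you flag, but it is the one place where a sign error would produce a false estimate rather than a relabelling.

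The genuine gap is part (i). All of your kernel analysis lives in the regime $|x|\rho \gtrsim 1$, where the Hankel asymptotics apply; the difficulty with $L^p$ (indeed even $L^2$) boundedness of $P_\pm$ sits entirely at the spatial origin, where $H^{(1,2)}_\nu(z) \sim z^{-\nu}$ while $J_\nu(z) \sim z^{\nu}$, so the kernel of the operator you define behaves like $|x|^{-(d-2)}$ as $|x| \to 0$. Against the measure $r^{d-1}\,dr$ this is not square-integrable near $r=0$ once $d \geq 4$, so in the dimensions relevant to this paper the operator as literally defined is not bounded on $L^2$, and the Calder\'on--Zygmund argument you gesture at cannot repair this: CZ theory takes $L^2$ boundedness as input, and your kernel estimates do not cover the region $|x|\rho \lesssim 1$ at all. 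This is exactly the point the cited lemmas treat with care: what is proved there, and what the present paper actually uses downstream, is boundedness of the frequency-localised, spatially truncated operators such as $\chi_{|x| \gtrsim N^{-1}} P_\pm P_N$, together with kernel bounds that are likewise only asserted for $|x| \gtrsim N^{-1}$. To close the gap you should either prove (i) in that truncated form (which does follow from your $t=0$ kernel bounds via Schur's test and interpolation) or explain how the definition of $P_\pm$ is to be modified near the origin; as written, the unrestricted claim in (i) is not delivered by your construction.
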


\begin{proof}  See \cite[Proposition 6.2]{ktv} (for the $d=2$ case) or \cite[Lemma 4.1, Lemma 4.2]{kvz} (for the higher $d$ case).
\end{proof}

\begin{remark} Heuristically, $P_- P_N e^{it\Delta}$ and $P_+ P_N e^{-it\Delta}$ for $t > 0$ both propagate away from the origin at speeds $\sim N$.  The decay $(|x| |y|)^{-(d-1)/2} |t|^{-1/2}$ is superior to the standard decay $|t|^{-d/2}$, which reflects the additional averaging away from the origin caused by the spherical symmetry.  (In the proof of \cite[Proposition 6.2]{ktv}, this additional averaging is captured using the standard asymptotics of Bessel and Hankel functions.)
\end{remark}

Now we prove Theorem \ref{smthm}.  Fix $d,I,u,m,R$; we allow implied constants to depend on $d,I,m$.  We may take $R$ to be a power of $2$. By the triangle inequality, we have
\begin{align*}
\| u \|_{L^2_t L^{2d/(d-2)}_x( I \times (\R^d \backslash B(0,R)) )} 
&\lesssim \| P_{\leq 1/R} u \|_{L^2_t L^{2d/(d-2)}_x(I \times \R^d)} \\
&\quad + \sum_{N > 1/R} \sum_\pm \| P_\pm P_N u \|_{L^2_t L^{2d/(d-2)}_x( I \times (\R^d \backslash B(0,R)) )}.
\end{align*}
For the first term, we use Bernstein's inequality to estimate
$$ \| P_{\leq 1/R} u(t) \|_{ L^{2d/(d-2)}_x( \R^d ) } \lesssim R^{-1} \| u(t) \|_{L^2_x(\R^d)} \lesssim R^{-1}$$
which is acceptable, so we turn to the latter terms.  For ease of notation we shall just deal with the the incoming terms $\pm = -$, as the outgoing terms $\pm = -$ terms are handled similarly (but using Duhamel backwards in time instead of forwards).

Write $I = [t_0,t_1]$, then by Duhamel's formula we have
$$ P_- P_N u(t) = P_- P_N e^{i(t-t_0)\Delta} u(t_0) - i \int_{t_0}^t P_- P_N e^{i(t-t')\Delta} F(u(t'))\ dt'.$$
The contribution of the linear term $P_- P_N e^{i(t-t_0)\Delta} u(t_0)$ is bounded by
\begin{align*}
\| \sum_{N > 1/R} P_- P_N e^{i(t-t_0)\Delta} u(t_0) \|_{L^2_t L^{2d/(d-2)}_x(I \times \R^d)} 
&\lesssim
\| \sum_{N > 1/R} P_N e^{i(t-t_0)\Delta} u(t_0) \|_{L^2_t L^{2d/(d-2)}_x(I \times \R^d)}  \\
&\lesssim \| e^{i(t-t_0)\Delta} u(t_0) \|_{L^2_t L^{2d/(d-2)}_x(I \times \R^d)}  \\
&\lesssim \| u(t_0) \|_{L^2_x(\R^d)} \\
&\lesssim 1
\end{align*}
thanks to Proposition \ref{inout}(i), the boundedness of the Littlewood-Paley projection $P_{>1/R}$, and the endpoint Strichartz estimate \eqref{endpoint-strichartz}.  Thus this contribution is acceptable, and it remains to show that
\begin{equation}\label{sumnr}
 \sum_{N > 1/R} \| \int_{t_0}^t P_- P_N e^{i(t-t')\Delta} F(u(t'))\ dt' \|_{L^2_t L^{2d/(d-2)}_x(I \times (\R^d \backslash B(0,R)) )} \lesssim R^{-1}.
\end{equation}
As we are allowed to let implied constants depend on $I$, it suffices to show that
$$ \int_{t_0}^t \| P_- P_N e^{i(t-t')\Delta} F(u(t'))\|_{L^{2d/(d-2)}_x(\R^d \backslash B(0,R))}\ dt'  \lesssim (NR)^{-c} R^{-1}$$
for some absolute constant $c>0$ and all $t \in I$ and $N > 1/R$.  By dyadic decomposition it suffices to show that
$$ \int_{t_0}^t \| P_- P_N e^{i(t-t')\Delta} F(u(t'))\ dt' \|_{L^{2d/(d-2)}_x(B(0,2^{m+1}R) \backslash B(0,2^m R))}\ dt'  \lesssim (2^m NR)^{-c} R^{-1}$$
for all $m \geq 0$.  Replacing $R$ by $2^m R$, we thus see that it suffices to show that
$$ \int_{t_0}^t \| P_- P_N e^{i(t-t')\Delta} F(u(t'))\ dt' \|_{L^{2d/(d-2)}_x(B(0,2R)) \backslash B(0,R)}\ dt' \lesssim (NR)^{-c} R^{-1}$$
whenever $R > 0$, $N > 1/R$, and $t \in I$.

From Proposition \ref{inout} we see that
$$ \| P_- P_N e^{it\Delta} f \|_{L^\infty_x(B(0,2R) \backslash B(0,R))} \lesssim (R (R+N|t|))^{-(d-1)/2} |t|^{-1/2} \|f\|_{L^1_x(\R^d)}$$
for $t \gtrsim N^{-2}$, and
$$ \| P_- P_N e^{it\Delta} f \|_{L^\infty_x(B(0,2R) \backslash B(0,R))} \lesssim R^{-(d-1)/2} N \|f\|_{L^1_x(\R^d)}$$
for $0 < t \lesssim N^{-2}$; we unify these two estimates as
$$ \| P_- P_N e^{it\Delta} f \|_{L^\infty_x(B(0,2R) \backslash B(0,R))} \lesssim (R (R+N|t|))^{-(d-1)/2} N \langle N^2 t\rangle^{-1/2} \|f\|_{L^1_x(\R^d)}$$
for $t > 0$.  On the other hand, as $P_-, P_N, e^{it\Delta}$ are bounded on $L^2$ we have
$$ \| P_- P_N e^{it\Delta} f \|_{L^2_x(B(0,2R) \backslash B(0,R))} \lesssim  \|f\|_{L^2_x(\R^d)}$$
and hence by interpolation
\begin{align*}
\| P_- P_N e^{it\Delta} &f \|_{L^{2d/(d-4)}_x(B(0,2R) \backslash B(0,R))} \\
&\lesssim [(R (R+N|t|))^{-(d-1)/2} N \langle N^2 t\rangle^{-1/2}]^{4/d} \|f\|_{L^{2d/(d+4)}_x(\R^d)}.
\end{align*}
Since
$$ \| F(u(t'))\|_{L^{2d/(d+4)}_x(\R^d)} \lesssim \|u(t') \|_{L^2_x(\R^d)}^{1+4/d} \lesssim 1$$
for all $t' \in I$, we thus have
\begin{align*}
\| P_- P_N e^{i(t-t')\Delta} F(u(t')) &\|_{L^{2d/(d-4)}_x(B(0,2R) \backslash B(0,R))} \\
&\lesssim [(R (R+N|t-t'|))^{-(d-1)/2} N \langle N^2 (t-t')\rangle^{-1/2}]^{4/d} 
\end{align*}
and hence by H\"older's inequality
\begin{align*}
\| P_- P_N e^{i(t-t')\Delta} F(u(t')) &\|_{L^{2d/(d-2)}_x(B(0,2R) \backslash B(0,R))} \\
&\lesssim R [(R (R+N|t-t'|))^{-(d-1)/2} N \langle N^2 (t-t')\rangle^{-1/2}]^{4/d}.
\end{align*}
We can thus bound the left-hand side of \eqref{sumnr} by
$$ \int_{-\infty}^t R [(R (R+N|t-t'|))^{-(d-1)/2} N \langle N^2 (t-t')\rangle^{-1/2}]^{4/d}\ dt'.$$
The dominant contribution of this integral occurs in the region when $|t-t'| \sim R/N$, and so we obtain a total contribution of  
$$ \lesssim R (R/N) ( R^{-(d-1)} (R/N)^{-1/2} )^{4/d} = R^{-1} (RN)^{-(d-2)/d}$$
which is acceptable.  This proves Theorem \ref{smthm}.

\begin{remark} One can improve the $1$ term on the right-hand side of \eqref{preli} to $R^{-c}$ for some $c > 0$, by using the improved Strichartz estimates in \cite{Shuanglin} that are available in the spherically symmetric case.  However, we will not need this improvement here.
\end{remark}

\section{Nearly continuous solutions are Strichartz class}

Theorem \ref{smthm} gives Strichartz norm control of a solution away from the spatial origin.  When the solution is sufficiently close in $L^\infty_t L^2_x$ to a Strichartz class solution, we can bootstrap Theorem \ref{smthm} to in fact obtain Strichartz control all the way up to the origin.  More precisely, we now show

\begin{theorem}[Strichartz class criterion]\label{strich}  Let $d \geq 5$, let $I$ be a compact interval, and let $u \in L^\infty_t L^2_x(I \times \R^d)$ be a spherically symmetric weak solution to NLS.  Suppose also that there exists a Strichartz-class solution $v \in C^0_t L^2_x \cap L^2_t L^{2d/(d-2)}_x(I \times \R^2)$ such that $\| u-v\|_{L^\infty_t L^2_x(\R^d)} \leq \eps$.  If $\eps$ is sufficiently small depending on $d$, then $u \in L^2_t L^{2d/(d-2)}_x(I \times \R^d)$.
\end{theorem}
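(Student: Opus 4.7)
Setting $w := u - v$ (with $\|w\|_{L^\infty_t L^2_x(I \times \R^d)} \leq \eps$), the task reduces to proving $w \in L^2_t L^{2d/(d-2)}_x(I \times \R^d)$, since $v$ already lies in this space. The function $w$ is spherically symmetric and satisfies the perturbed equation $iw_t + \Delta w = F(v+w) - F(v)$ in the sense of distributions. Applying Theorem~\ref{smthm} to $u$, combined with the Strichartz bound on $v$ and the triangle inequality, gives $\|w\|_{L^2_t L^{2d/(d-2)}_x(I \times (\R^d \setminus B(0,1)))} \lesssim 1$, so the remaining problem is to control $w$ on the interior region $I \times B(0,1)$.

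For the interior, I would mimic the proof of Theorem~\ref{smthm} but now for the perturbed equation. Decompose $w = P_{\leq N_0} w + \sum_{N > N_0} P_N w$; the low-frequency part is dispatched by Bernstein's inequality and the $L^\infty_t L^2_x$ bound on $w$. For each $P_N w$ with $N > N_0$, apply the in/out splitting and write a Duhamel formula: forward from $t_0$ for the incoming component $P_- P_N w$, backward from the right endpoint $t_1$ of $I$ for the outgoing component $P_+ P_N w$, so that in both cases the propagator $P_\pm P_N e^{\mp i s \Delta}$ with $s \geq 0$ enjoys the sharp kernel estimates of Proposition~\ref{inout}. The linear data terms are bounded by the endpoint Strichartz inequality, while the Duhamel integrals are estimated by inserting the kernel bound and integrating against the inhomogeneity
\[
F(v+w) - F(v) = O\!\left(|v|^{4/d}|w| + |w|^{1+4/d}\right),
\]
splitting the spatial integration in $y$ into $|y| \leq 1$ (where only the $L^\infty_t L^2_x$ bound on $w$ is available) and $|y| > 1$ (where Theorem~\ref{smthm} yields Strichartz-type control on $w$). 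After summing over frequencies, one should obtain an inequality of the schematic form
\[
\|w\|_{L^2_t L^{2d/(d-2)}_x(I \times B(0,1))} \leq C(I,v) + C_d\, \eps^{4/d}\, \|w\|_{L^2_t L^{2d/(d-2)}_x(I \times \R^d)} + (\text{cross terms}),
\]
which closes by absorption once $\eps$ is chosen small enough depending on $d$.

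The hardest step will be controlling the self-interaction $|w|^{1+4/d}$ on the region $|y| \leq 1$: there is no spatial separation to exploit, and all the smallness must come from the factor $\|w\|_{L^\infty_t L^2_x}^{4/d} \leq \eps^{4/d}$ extracted via H\"older, combined with the angular averaging encoded in the in/out kernel. For the sum over $N > N_0$ to converge, one needs a strictly positive power gain analogous to the $(NR)^{-(d-2)/d}$ factor at the end of Section~\ref{weak-sec}, and it is at this step that the hypothesis $d \geq 5$ (rather than $d \geq 4$) is used, matching the author's remark that the argument ``only barely fails at $d=4$.'' A separate technical point is that $w$ is only a distributional solution of the perturbed equation, so Strichartz estimates cannot be invoked naively; I would justify the bootstrap by a continuous-induction argument, partitioning $I$ into finitely many subintervals on which $\|v\|_{L^2_t L^{2d/(d-2)}_x}$ is small and propagating Strichartz control on $w$ from one subinterval to the next.
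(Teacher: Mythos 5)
Your overall strategy (pass to $w=u-v$, use the in/out decomposition and Duhamel forwards/backwards in time, feed in Theorem~\ref{smthm} as the a priori input, and extract smallness $\eps^{4/d}$ from the $L^\infty_t L^2_x$ bound via H\"older) is the right one and matches the paper's. But there is a genuine gap at the step where you ``close by absorption.'' Your schematic inequality bounds $\|w\|_{L^2_t L^{2d/(d-2)}_x(I\times B(0,1))}$ by $C(I,v)+C_d\eps^{4/d}\|w\|_{L^2_tL^{2d/(d-2)}_x(I\times\R^d)}$, and absorbing the last term requires knowing \emph{a priori} that this quantity is finite. It is not: $u$ is only a weak solution, Theorem~\ref{smthm} gives a bound that degenerates like $R^{-1}$ as $R\to 0$, and the whole point of the theorem is that finiteness of the Strichartz norm near the spatial origin is exactly what must be proved. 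Your time-partition/continuous-induction remark does not repair this, because the potential divergence is at a spatial point for every time, not at an isolated time. The paper's resolution is a \emph{spatial} multi-scale bootstrap: it introduces $c_k:=\|w\|_{L^2_tL^{2d/(d-2)}_x(I\times(\R^d\setminus B(0,2^k)))}$ for \emph{every} integer $k$ (each finite, with $c_k\lesssim 2^{-k}+1$ from Theorem~\ref{smthm}), decomposes the nonlinearity into all dyadic annuli $B(0,2^{j+1})\setminus B(0,2^j)$ rather than just the two regions $|y|\le 1$ and $|y|>1$, and proves the key inequality
$$ c_k \lesssim \eps + \eps^{4/d}\sum_{j\le k}2^{-\frac{d-2}{2}(k-j)}\bigl(c_j+c_j^{1-4/d}\bigr),$$
whose convolution structure in the scale parameter lets one iterate a bound of the form $c_k\le A+B2^{-k}$ into $c_k\le A+\tfrac12 B2^{-k}$ and hence, in the limit, into $\sup_k c_k\le A$. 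Without this family of finite quantities and the gain $2^{-\frac{d-2}{2}(k-j)}$ between scales, the fixed-radius absorption cannot be justified.

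Relatedly, you have misplaced the role of the hypothesis $d\ge5$. The frequency sums over $N$ converge already for $d\ge4$ (the key inequality above is proved for $d\ge4$). What requires $d\ge5$ is the scale iteration just described: the inter-scale decay $2^{-\frac{d-2}{2}(k-j)}$ must beat the blowup $c_j\lesssim 2^{-j}$ of the a priori bound as $j\to-\infty$, i.e.\ one needs $\frac{d-2}{2}>1$. This is another symptom of the missing multi-scale structure in your argument: without indexing the estimate by the spatial scale, there is no place for this competition between decay and blowup to appear.
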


\begin{remark} The theorem fails if $\eps$ is large, as one can see from the weak solution defined by \eqref{pc} for $t \neq 0$ and vanishing for $t=0$.  The arguments in fact give an effective upper bound for the $L^2_t L^{2d/(d-2)}_x$ norm of $u$ in terms of the corresponding norm of $v$.  Heuristically, the point is that when $u$ (or $u-v$) has small mass, then there are not enough nonlinear effects in play to support persistent mass concentration (as in the example in Remark \ref{smrem}) that would cause the $L^2_t L^{2d/(d-2)}$ norm to become large.
\end{remark}

We now prove Theorem \ref{strich}.  We fix $d,I,u,v,\eps$ and allow all implied constants to depend on $d$.  By shrinking the interval $I$ and using compactness we may assume that
\begin{equation}\label{vsm}
\|v\|_{L^2_t L^{2d/(d-2)}_x(I \times \R^2)} \leq \eps.
\end{equation}
We write $w := u-v$, thus 
\begin{equation}\label{wbound}
\|w\|_{L^\infty_t L^2_x(I \times \R^d)} \lesssim \eps
\end{equation}
and $w$ solves the difference equation
\begin{equation}\label{wdiff}
iw_t + \Delta w = F(v+w) - F(v) 
\end{equation}
in the integral sense.  From the fundamental theorem of calculus (or mean-value theorem) we have the elementary inequality
\begin{equation}\label{void}
F(v+w) - F(v) = O( |w| (|v|+|w|)^{4/d} ).
\end{equation}

For each integer $k$, let $c_k$ denote the quantity
\begin{equation}\label{cmdef}
c_k := \|w\|_{L^2_t L^{2d/(d-2)}_x(I \times (\R^d \backslash B(0,2^k)))}.
\end{equation}
From Theorem \ref{smthm} and the triangle inequality we have
\begin{equation}\label{cmb}
c_k \lesssim_I 2^{-k} + 1
\end{equation}
for all $k$.  To prove the theorem, it suffices by the monotone convergence theorem to show that $\sup_k c_k$ is finite.  For this we use the following inequality:

\begin{proposition}[Key inequality]\label{key}  Let $d \geq 4$. For every $k$ we have
$$ c_k \lesssim \eps + \eps^{4/d} \sum_{j \leq k} 2^{-\frac{d-2}{2}(k-j)} (c_j + c_j^{1-4/d}).$$
\end{proposition}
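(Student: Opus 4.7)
The plan is to write $w$ in Duhamel form, control the free evolution via the endpoint Strichartz inequality, and treat the nonlinear source $G := F(v+w) - F(v)$ through a spatial annular decomposition combined with the in/out kernel analysis of Proposition~\ref{inout}. Starting from \eqref{wdiff} in integral form,
\begin{equation*}
w(t) = e^{i(t-t_0)\Delta} w(t_0) + i \int_{t_0}^t e^{i(t-t')\Delta} G(t') \, dt',
\end{equation*}
the endpoint Strichartz estimate \eqref{endpoint-strichartz} combined with \eqref{wbound} immediately yields $\|e^{i(t-t_0)\Delta} w(t_0)\|_{L^2_t L^{2d/(d-2)}_x} \lesssim \|w(t_0)\|_{L^2_x} \lesssim \eps$, accounting for the stand-alone $\eps$ term in the claim.

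For the nonlinear piece I would split $G = \sum_j G_j$, with $G_j := G \mathbf{1}_{A_j}$ localized to the dyadic annulus $A_j := \{2^j \leq |x| < 2^{j+1}\}$, and bound each $\|G_j\|_{L^2_t L^{2d/(d+2)}_x}$. Using \eqref{void} in the form $|G| \lesssim |w||v|^{4/d} + |w|^{1+4/d}$ together with the H\"older inequality $\|\cdot\|_{L^{2d/(d+2)}_x} \leq \|\cdot\|_{L^{2d/(d-2)}_x} \|\cdot\|_{L^{d/2}_x}$ and the identity $\||f|^{4/d}\|_{L^{d/2}_x} = \|f\|_{L^2_x}^{4/d}$, the $|w|^{1+4/d}$ contribution yields $\lesssim \|w\|_{L^\infty_t L^2_x}^{4/d} c_j \lesssim \eps^{4/d} c_j$. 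For the cross term $|w||v|^{4/d}$, since $\|v\|_{L^\infty_t L^2_x}$ is only $O(1)$, I would extract the Strichartz smallness $\|v\|_{L^2_t L^{2d/(d-2)}_x} \leq \eps$ from \eqref{vsm} by first spatially interpolating $\|w \mathbf{1}_{A_j}\|_{L^{p}_x} \lesssim \|w\|_{L^2_x}^{4/d} \|w \mathbf{1}_{A_j}\|_{L^{2d/(d-2)}_x}^{(d-4)/d}$ (with $p = 2d^2/(d^2 - 2d + 8)$; the hypothesis $d \geq 5$ ensures the weight $(d-4)/d \geq 0$), and then Hölder in time with the balance $(d-4)/(2d) + 4/(2d) = 1/2$. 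This produces $\lesssim \eps^{4/d} c_j^{1-4/d} \cdot \|v\|_{L^2_t L^{2d/(d-2)}_x}^{4/d} \lesssim \eps^{8/d} c_j^{1-4/d}$, which is $\lesssim \eps^{4/d} c_j^{1-4/d}$ since $\eps \leq 1$. Combined, $\|G_j\|_{L^2_t L^{2d/(d+2)}_x} \lesssim \eps^{4/d}(c_j + c_j^{1-4/d})$.

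Finally, to propagate each $G_j$ to the target region $\R^d \setminus B(0, 2^k)$ I would closely follow the computation at the end of the proof of Theorem~\ref{smthm}. Decomposing the Duhamel integral via Littlewood-Paley scales $N$ and in/out projections $P_\pm$, I would use the dispersive kernel bounds of Proposition~\ref{inout} interpolated against the trivial $L^2 \to L^2$ estimate to produce an $L^{2d/(d+2)}_x \to L^{2d/(d-2)}_x$ mapping with operator norm controlled by $M^{2/d}$, where $M = 2^{-(k+j)(d-1)/2} N \langle N^2 t\rangle^{-1/2}$, then apply H\"older on the target annulus at scale $2^k$ and integrate in $t'$ with the dominant contribution concentrated at the sonic time $|t-t'| \sim 2^k / N$. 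Summing dyadically over $N \gtrsim 2^{-k}$ then produces the geometric decay factor $2^{-\frac{d-2}{2}(k-j)}$ for $j \leq k$. Contributions with $j > k$ (and the diagonal $j = k$) carry no such decay; they can be treated as errors and absorbed into the left-hand side via the $\eps^{4/d}$ prefactor (for $\eps$ small), which is why only $j \leq k$ appears in the displayed sum.

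The main obstacle is the kernel bookkeeping that yields precisely the exponent $(d-2)/2$ in the geometric decay. The analogous computation in Theorem~\ref{smthm}'s proof gave the weaker exponent $(d-2)/d$ because the source was controlled only in $L^\infty_t L^{2d/(d+4)}_x$; the improvement here reflects both the stronger Strichartz-dual control $L^2_t L^{2d/(d+2)}_x$ and the tight spatial confinement of $G_j$ to a single annulus $A_j$. Balancing the interpolation of the in/out kernel estimate, the H\"older cutoff to the target annulus, and the time integration around the sonic time must all conspire to land on this sharp exponent; verifying that the sum over $j > k$ (as well as the diagonal term) can indeed be absorbed on the left is the only other delicate bookkeeping point.
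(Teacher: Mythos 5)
Your setup — Duhamel, endpoint Strichartz for the free evolution, annular decomposition of the source $G=F(v+w)-F(v)$, and the H\"older/interpolation bounds giving $\|G\mathbf{1}_{A_j}\|_{L^2_tL^{2d/(d+2)}_x}\lesssim \eps^{4/d}(c_j+c_j^{1-4/d})$ — is correct and matches the paper. The gap is in the propagation step, which is where the factor $2^{-\frac{d-2}{2}(k-j)}$ must actually be produced. You only sum over frequencies $N\gtrsim 2^{-k}$, but the kernel bounds of Proposition \ref{inout} require $|x|\gtrsim N^{-1}$ and hence say nothing about the low-frequency part of the Duhamel term on the region $|x|\sim 2^k$. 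The paper splits $w=P_{\leq 2^{-k}}w+\sum_\pm P_\pm P_{>2^{-k}}w$ and treats $P_{\leq 2^{-k}}w$ globally in space by the endpoint Strichartz estimate together with Bernstein: $\|P_{\leq 2^{-k}}(f\mathbf{1}_{A_j})\|_{L^{2d/(d+2)}_x}\lesssim 2^{-\frac{d-2}{2}k}\|f\mathbf{1}_{A_j}\|_{L^1_x}\lesssim 2^{-\frac{d-2}{2}(k-j)}\|f\|_{L^{2d/(d+2)}_x(A_j)}$. This Bernstein step is the true source of the exponent $\frac{d-2}{2}$; your proposal omits this regime entirely.

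Second, for $N\gtrsim 2^{-k}$ your ``sonic time $|t-t'|\sim 2^k/N$'' picture is imported from Theorem \ref{smthm}, where the source is global in space, but it is geometrically vacuous here: the source is confined to $A_j\subset B(0,2^{k-1})$ and the target lies outside $B(0,2^k)$, so the travel-time condition $|y|-|x|\sim N|t-t'|$ for the incoming kernel (and its time-reversed analogue for the outgoing one) never holds. One must instead use the non-sonic bound $\langle N^2(t-t')+N|x|-N|y|\rangle^{-m}$, which here gives a factor like $(2^kN)^{-50d}$; this is what makes the sum over $N>2^{-k}$ converge and renders the high-frequency contribution negligible. If you carry out your sonic-regime computation literally, the bound per frequency does not decay in $N$ (so the $N$-sum diverges), and the resulting power of $2^{k-j}$ comes out with the wrong sign. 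Finally, a small bookkeeping point: the terms with $j\geq k$ should be grouped into the single region $|x|\gtrsim 2^k$ and estimated as one block (giving the $j=k$ term of the claimed sum); summing the individual annular bounds over all $j>k$ need not converge, and ``absorbing into the left-hand side'' is not needed.
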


\begin{proof}  Fix $k$.  By the triangle inequality, we have
\begin{equation}\label{cm1}
\begin{split}
 c_k &\lesssim \| P_{\leq 2^{-k}} w(t) \|_{L^2_t L^{2d/(d-2)}_x(I \times \R^d)} \\
 &+ \sum_\pm 
\| P_\pm P_{>2^{-k}} w(t) \|_{L^2_t L^{2d/(d-2)}_x(I \times (\R^d \backslash B(0,2^k)))}.
\end{split}
\end{equation}
Consider the first term on the right-hand side.  By \eqref{wbound}, \eqref{wdiff}, \eqref{void}, \eqref{endpoint-strichartz} we have
$$ \| P_{\leq 2^{-k}} w(t) \|_{L^2_t L^{2d/(d-2)}_x(I \times \R^d)} 
\lesssim \eps + \| P_{\leq 2^{-k}} O( |w| (|v|+|w|)^{4/d} ) \|_{L^2_t L^{2d/(d+2)}_x(I \times \R^d)}$$
so to show that the contribution of this case is acceptable, it suffices to show that
$$ \| P_{\leq 2^{-k}} O( |w| (|v|+|w|)^{4/d} ) \|_{L^2_t L^{2d/(d+2)}_x(I \times \R^d)} \lesssim
\eps^{4/d} \sum_{j \leq k} 2^{-\frac{d-2}{2}(k-j)} (c_j + c_j^{4/d}).$$
By the triangle inequality, we can bound the left-hand side by
\begin{equation}\label{wow}
\begin{split}
& \| P_{\leq 2^{-k}} O( |w| 1_{\R^d \backslash B(0,2^{-k})} (|v|+|w|)^{4/d} ) \|_{L^2_t L^{2d/(d+2)}_x(I \times \R^d)}\\
&\quad + \sum_{j < k} \| P_{\leq 2^{-k}} O( |w| 1_{B(0,2^{-j+1}) \backslash B(0,2^{-j})} (|v|+|w|)^{4/d} ) \|_{L^2_t L^{2d/(d+2)}_x(I \times \R^d)}.
\end{split}
\end{equation}
For the first term, \eqref{wow} we discard the $P_{\leq 2^{-k}}$ projection and use H\"older's inequality to bound this by
\begin{align*}
&\lesssim \| w \|_{L^2_t L^{2d/(d+2)}_x(I \times (\R^d \backslash B(0,2^{-k})))}^{1-4/d}
\| v \|_{L^2_t L^{2d/(d+2)}_x(I \times \R^d)}^{4/d}
 \|w\|_{L^\infty_t L^2_x(I \times \R^d)}^{4/d}\\
&\quad +
\| w \|_{L^2_t L^{2d/(d+2)}_x(I \times (\R^d \backslash B(0,2^{-k})))} \|w\|_{L^\infty_t L^2_x(I \times \R^d)}^{4/d}
\end{align*}
which by \eqref{wbound}, \eqref{vsm}, \eqref{cmdef} is bounded by
$$ \lesssim \eps^{4/d} c_k^{1-4/d} + \eps^{4/d} c_k$$
which is acceptable.

For the second term of \eqref{wow}, we observe from the H\"older and Bernstein inequalities that
\begin{align*}
\| P_{\leq 2^{-k}} ( f 1_{B(0,2^{-j+1}) \backslash B(0,2^{-j})} ) \|_{L^{2d/(d+2)}_x(\R^d)} &\lesssim 
2^{-\frac{d-2}{2} k} \| f 1_{B(0,2^{-j+1}) \backslash B(0,2^{-j})} \|_{L^1_x(\R^d)} \\
&\lesssim 2^{-\frac{d-2}{2}(k-j)} \| f \|_{L^{2d/(d+2)}_x(B(0,2^{-j+1}) \backslash B(0,2^{-j}))}
\end{align*}
for any $f$.  Using this inequality and arguing as before, we see that the second term of \eqref{wow} is bounded by
$$ \lesssim \sum_{j<k}2^{-\frac{d-2}{2}(k-j)} (\eps^{4/d} c_j^{1-4/d} + \eps^{4/d} c_j)$$
which is acceptable.

Since we have dealt with the first term of \eqref{cm1}, it now suffices by the triangle inequality to show that
$$
\| P_\pm P_{> 2^{-k}} w(t) \|_{L^2_t L^{2d/(d-2)}_x(I \times (\R^d \backslash B(0,2^m)))}
\lesssim \eps + \eps^{4/d} \sum_{j \leq k} 2^{-\frac{d-2}{2}(k-j)} (c_j + c_j^{1-4/d})$$
for either choice of sign $\pm$.  We shall just do this for the incoming case $\pm = -$: the outgoing case $\pm=+$ is similar but requires one to apply Duhamel's formula backwards in time.

Write $I = [t_0,t_1]$.  By Duhamel's formula and \eqref{wdiff}, we have
$$ P_- P_{> 2^{-k}} w(t) = P_- P_{>2^{-k}} e^{i(t-t_0)\Delta} w(t_0) - i \int_{t_0}^t P_- P_{>2^{-k}} e^{i(t-t')\Delta} (F(v+w)-F(v))(t')\ dt'.$$
The contribution of the first term is $O(\eps)$ by Proposition \ref{inout}(i), \eqref{endpoint-strichartz}, and \eqref{wbound}, so it suffices to show that
\begin{align*}
\| \int_{t_0}^t P_- P_{>2^{-k}} &e^{i(t-t')\Delta} (F(v+w)-F(v))(t')\ dt' \|_{L^2_t L^{2d/(d-2)}_x(I \times (\R^d \backslash B(0,2^k)))} \\
&\lesssim \eps^{4/d} \sum_{j \leq k} 2^{-\frac{d-2}{2}(k-j)} (c_j + c_j^{1-4/d}).
\end{align*}
We split
$$ F(v+w)-F(v) = (F(v+w)-F(v)) 1_{\R^d \backslash B(0,2^{k-1})} + \sum_{j < k-1} (F(v+w)-F(v)) 1_{B(0,2^{j+1}) \backslash B(0,2^j)}.$$
The contribution of the first term can be estimated using Proposition \ref{inout}(i), \eqref{endpoint-strichartz}, \eqref{void} to be
$$ \lesssim \| |w| (|v|^{4/d} + |w|^{4/d}) \|_{L^2_t L^{2d/(d+2)}_x(I \times (\R^d \backslash B(0,2^{k-1})))}.$$
By a slight modification of the calculation used to bound the first term of \eqref{wow}, we can control this expression by
$$ \lesssim \eps^{4/d} c_{k-1}^{1-4/d} + \eps^{4/d} c_{k-1}$$
and so by the triangle inequality it suffices to show that
\begin{equation}\label{sumnd}
\begin{split}
 & \sum_{N > 2^{-k}} \| \int_{t_0}^t P_- P_N e^{i(t-t')\Delta} [(F(v+w)-F(v))(t') 1_{B(0,2^{j+1}) \backslash B(0,2^j)}]\ dt' \|_{L^2_t L^{2d/(d-2)}_x(I \times (\R^d \backslash B(0,2^k)))} \\
 &\quad \lesssim \eps^{4/d} 2^{-\frac{d-2}{2}(k-j)} (c_j + c_j^{1-4/d})
 \end{split}
\end{equation}
for each $j < k-1$.

Fix $j$.  By Proposition \ref{inout}(ii), (iii), the integral kernel $(P_- P_N e^{i(t-t')\Delta})(x,y)$ for $x \in \R^d \backslash B(0,2^k)$, $t' < t$, $N > 2^{-k}$, and $y \in B(0,2^{j+1}) \backslash B(0,2^j)$ obeys the bounds
\begin{align*}
|[P_- P_N e^{i(t-t')\Delta}](x,y)| &\lesssim \frac{N^d}{(N|x|)^{(d-1)/2} \langle 2^j N \rangle^{(d-1)/2}} \langle N^2 (t-t') + N|x| \rangle^{-100d} \\
&\lesssim N^d (N|x|)^{-50d} \langle N^2 (t-t') \rangle^{-50d}
\end{align*}
(say).  From this we obtain the pointwise bound
$$ | P_- P_N e^{i(t-t')\Delta} ( f 1_{B(0,2^{j+1}) \backslash B(0,2^j)} )(x)| \lesssim N^d (N|x|)^{-50d} \langle N^2 (t-t') \rangle^{-50d} \|f\|_{L^1_x(B(0,2^{j+1}) \backslash B(0,2^j))} $$
for $x \in \R^d \backslash B(0,2^k)$ and any $f$, which by H\"older's inequality implies the bounds
\begin{align*}
\| &P_- P_N e^{i(t-t')\Delta} ( f 1_{B(0,2^{j+1}) \backslash B(0,2^j)} ) \|_{L^{2d/(d-2)}_x(\R^d \backslash B(0,2^k))}\\
&\quad \lesssim 2^{\frac{d-2}{2} k} 2^{\frac{d-2}{2} j} 
N^d (2^k N)^{-50d} \langle N^2 (t-t') \rangle^{-50d} \|f\|_{L^{2d/(d+2)}_x(B(0,2^{j+1}) \backslash B(0,2^j))}.
\end{align*}
By Young's inequality we conclude that the left-hand side of \eqref{sumnd} is bounded by
$$ \lesssim \sum_{N > 2^{-k}} 2^{\frac{d-2}{2} k} 2^{\frac{d-2}{2} j} N^{d-2} (2^k N)^{-50d} 
\| F(v+w)-F(v) \|_{L^2_t L^{2d/(d+2)}_x(I \times B(0,2^{j+1}) \backslash B(0,2^j))}.$$
Modifying the computation used to bound the first term of \eqref{wow}, this expression can be controlled by
$$ \lesssim \sum_{N > 2^{-k}} 2^{\frac{d-2}{2} k} 2^{\frac{d-2}{2} j} N^{d-2} (2^k N)^{-50d} 
( \eps^{4/d} c_j^{1-4/d} + \eps^{4/d} c_j )$$
and on performing the summation in $N$ one obtains the claim \eqref{sumnd}, and Proposition \ref{key} follows.
\end{proof}

From Proposition \ref{key} (and using the hypothesis $d \geq 5$ to make the decay $2^{-\frac{d-2}{2}(k-j)}$ faster tan the blowup of $2^{-j}$), we see that if we have any bound of the form
$$ c_k \leq A + B 2^{-k}$$
for all $k$ and some $A,B > 0$, then (if $\eps$ is sufficiently small, and $A$ is sufficiently large depending on $\eps$), one can conclude a bound of the form
$$ c_k \leq A + \frac{1}{2} B 2^{-k}$$
for all $k$.  Iterating this and taking limits, we conclude that
$$ c_k \leq A$$
for all $k$.  Applying this argument starting from \eqref{cmb} we conclude that $c_k \lesssim 1$ for all $k$, as desired, and Theorem \ref{strich} follows.

\section{Proofs of theorems}

With Theorem \ref{strich} in hand, it is now an easy matter to establish Theorems \ref{uncond} and \ref{semis}.

\subsection{Proof of Theorem \ref{uncond}}\label{uncond-sec}

It is clear that (i) implies (ii), and that (iii) implies (iv) implies (v).  From Proposition \ref{strichprop} we also see that (ii) implies (iii).  So the only remaining task is to show that (v) implies (i).  It suffices to do this locally, i.e. to show that for any time $t$ for which (v) holds, that $u$ is a Strichartz class solution in some neighbourhood of $t$ in $I$.

By the hypothesis (v), one can find a connected neighbourhood $J$ of $t$ in $I$ such that $M(u(t')) \leq M(u(t)) + \eps_d$ for all $t' \in J$.  By \eqref{cosine} (and shrinking $J$ if necessary) we conclude that $\|u(t')-u(t)\|_{L^2_x(\R^d)}^2 \leq 2 \eps_d$ (say) for all $t' \in J$.

By shrinking $J$ some more, we may apply Proposition \ref{strichprop} to find a Strichartz class solution $v \in C^0_t L^2_x \cap L^2_t L^{2d/(d-2)}_x(J \times \R^d)$ on $J$ with $v(t)=u(t)$.  Since $v$ is a strong solution, by shrinking $J$ some more we may assume that $\|v(t')-v(t) \|_{L^2_x(\R^d)} \leq \eps_d^{1/2}$ for all $t' \in J$.  By the triangle inequality we thus see that $\|u-v\|_{L^\infty_t L^2_x(J \times \R^d)} \lesssim \eps_d^{1/2}$.  Applying Theorem \ref{strich} and taking $\eps_d$ sufficiently small, we conclude that $u$ is a Strichartz class solution on $J$ as required, and Theorem \ref{uncond} follows.

\subsection{Proof of Theorem \ref{semis}}\label{semis-sec}

It is clear that (i) implies (ii) and that (iii) implies (iv) implies (v).  From Corollary \ref{quantcor} we know that (i) implies (iii), while from Proposition \ref{strichprop}(iii) and Definition \ref{semidef} we see that (ii) implies (iv).  Thus, as before, the only remaining task is to show that (v) implies (i).  Again, it suffices to establish the local claim that if $t \geq t_0$ is such that (v) holds, then $u$ is in the Strichartz class for some $[t,t+\eps) \cap I$, and similarly for $t \leq t_0$ and $(t-\eps,t] \cap I$.  But this follows by a routine modification of the arguments in Section \ref{uncond-sec}.

\end{document}